\documentclass[11pt]{article}
\usepackage{amsfonts}
\usepackage{lipsum}
\usepackage{latexsym}
\usepackage{amsmath}
\usepackage{amssymb}
\usepackage{amsthm}
\usepackage{graphicx}
\usepackage{dsfont}
\usepackage{fullpage}
\usepackage{enumerate}
\usepackage{color}
\usepackage{bbm}
\newtheorem{theorem}{Theorem}[section]
\newtheorem{lemma}[theorem]{Lemma}

\newtheorem{proposition}[theorem]{Proposition}
\newtheorem{corollary}[theorem]{Corollary}

\newtheorem{conjecture}[theorem]{Conjecture}

\theoremstyle{definition}
\newtheorem{definition}[theorem]{Definition}
\newtheorem{example}[theorem]{Example}

\newtheorem{thmy}{Theorem}
\newenvironment{oldtheorem}{\stepcounter{thm}\begin{thmy}}{\end{thmy}}
\newtheorem*{note*}{Note}

\newcommand{\R}{\mathbb R}
\newcommand{\Rn}{{\mathbb R}^n}

\newcommand{\Ha}{{{\cal H}^{n-1}}}

\makeatother

\newcommand\blfootnote[1]{%
  \begingroup
  \renewcommand\thefootnote{}\footnote{#1}%
  \addtocounter{footnote}{-1}%
  \endgroup
}

\begin{document}


\title{\bf On Ball's conjectured Santal\'o-type inequality}
\date{\today}
\medskip
\author{K\'aroly J. B\"or\"oczky, Konstantinos Patsalos, Christos Saroglou}
\maketitle
\blfootnote{2020 Mathematics Subject Classification. Primary: 52A20; Secondary: 52A38, 52A40.}
\blfootnote{Keywords. Blaschke-Santal\'o inequality, Ball's conjecture, duality}
\begin{abstract}
We prove that if $K$ is a symmetric and isotropic convex body in $\mathbb{R}^n$, then
$$\int_K\langle x,u\rangle^2\,dx\int_{K^\circ}\langle x,u\rangle^2\,dx\leq \left(\int_{B_2^n}\langle x,u\rangle^2\,dx\right)^2,\qquad\forall u\in\mathbb{R}^n,$$with equality for some $u\neq o$, if and only if $K$ is a Euclidean ball. This confirms a conjecture by Keith Ball (1986), stating that for any symmetric convex body $K$ in $\mathbb{R}^n$, it holds
$$\int_K\int_{K^\circ}\langle x,y\rangle^2\,dx\,dy\leq \int_{B_2^n}\int_{B_2^n}\langle x,y\rangle^2\,dx\,dy,$$with equality if and only if $K$ is an ellipsoid.

 Fortunately, our method for proving Ball's conjectured inequality admits a  quantitative stability refinement, which in turn yields an asymptotically optimal stability version of the Blaschke-Santal\'o inequality for origin symmetric convex bodies in terms of the symmetric difference metric. This resolves another well known open problem.

\end{abstract}

\section{Introduction}

For each $n\in \mathbb{N}$, we fix an inner product $\langle \cdot ,\cdot\rangle$ and an orthonormal basis $\{e_1,\dots,e_n\}$ in $\R^n$. The origin will be denoted by $o$. For $x\in \R^n$, set $x_j:=\langle x,e_j\rangle$, $j=1,\dots,n$, and $\|x\|:=\sqrt{\langle x,x\rangle}$.
Set also $B_2^n:=\{x\in \R^n:\|x\|\leq 1\}$ and $S^{n-1}:=\{x\in\R^n:\|x\|=1\}$ to be the Euclidean unit ball and the Euclidean unit sphere, respectively, with respect to the inner product $\langle \cdot ,\cdot\rangle$ and denote $\omega_n=|B^n_2|$. Set $\R^n_+$ to be the first orthant, that is,
$\R^n_+=\{x\in\R^n:x_i\geq 0,\ i=1,\dots,n\}$. 
The inradius $r(K)$ of a convex set $K$ will denote the maximal radius of any Euclidean balls contained in $K$. 
A convex body $K$ will be a compact convex set with nonempty interior. 
In addition, $K$ will be called symmetric, if $K=-K$. Finally, $K$ will be called unconditional if for any $x=(x_1,\dots,x_n)\in K$ and for any choice $\varepsilon_1,\dots,\varepsilon_n\in\{\pm 1\}$, the point $(\varepsilon_1 x_1,\dots,\varepsilon_n x_n)$ is also contained in $K$.

For a symmetric convex body $K$ in $\R^n$, its polar body $K^\circ$ is defined by
$$
K^\circ=\{x\in\R^n:\langle x,y\rangle\leq 1\;\forall y\in K\}
$$
and satisfies
\begin{equation}
\label{polar-linear-map}
(T K)^\circ=T^{-t}K^\circ,\qquad \forall T\in GL(n)
\end{equation}
One of the cornerstones of convex geometry is the classical Blaschke-Santal\'o inequality, which we  only state here in the symmetric case. 

\begin{oldtheorem}[Blaschke-Santal\'o inequality]
\label{thm-old-BS} 
If $K$ is a symmetric convex body in $\R^n$, then
\begin{equation}
\label{eq-san}
|K|\cdot |K^\circ|\leq |B_2^n|^2,
\end{equation}
with equality if and only if $K$ is an ellipsoid. 
\end{oldtheorem}

The inequality for all $n\geq 2$ is due to Santal\'o \cite{San}.
By now, there are several proofs of the Blaschke-Santal\'o inequality and some extensions of it, see e.g. \cite[Theorem 7.3]{And06}, \cite{AKM04}, \cite{Bal}, \cite{FrM}, \cite{Leh09b}, \cite{Leh09c}, \cite{Lut91}, \cite{LZ}, \cite{MeP} and others.

Keith Ball \cite{Bal}, \cite{bal2} has conjectured the following Santal\'o-type inequality.

\begin{conjecture}[Ball, 1986]
\label{conj-ball}
If $K$ is a symmetric convex body in $\R^n$, then
\begin{equation}
\label{eq-conj}
    \int_K\int_{K^\circ}\langle x,y\rangle^2\, dx\, dy\leq \int_{B_2^n}\int_{ B^n_2}\langle x,y\rangle^2\, dx\, dy.
\end{equation}
\end{conjecture}

He proved that \eqref{eq-conj} is stronger than \eqref{eq-san}, in the sense that if \eqref{eq-conj} was known to be true, then \eqref{eq-san} would follow by the latter in a few lines.  

Ball proved Conjecture \ref{conj-ball} in the unconditional case. The key to his proof is the following.

\begin{oldtheorem}[Ball]\label{prop-Ball} If $X,Y\subseteq \R^n_+$ are compact sets of positive volume such that $\langle x,y\rangle\leq 1$ holds for all $x\in X$ and $y\in Y$, then for any $j=1,\dots,n$, it holds
\begin{equation}\label{eq-unc-ball}
\int_{X}x_j^2\, dx\int_{Y}x_j^2\, dx\leq \left(\int_{\R^n_+\cap B_2^n}x_j^2\, dx\right)^2=\frac{1}{2^{2n}}\left(\int_{ B_2^n}x_j^2\, dx\right)^2,   
\end{equation}
with equality if and only if there exists an unconditional ellipsoid $E$ such that $X=\R^n_+\cap E$ and $Y=\R^n_+\cap E^\circ$.
\end{oldtheorem}

To be more precise, \eqref{eq-unc-ball} was originally established (as part of the proof of the unconditional case of Conjecture \ref{conj-ball}) in \cite{Bal} and \cite{bal2} for $X=C\cap \R^n_+$ and $Y=C^\circ\cap \R^n_+$, where $C$ is an unconditional convex body. However, the extension to the more general setting is straightforward
(see e.g. Kalantzopoulos, Saroglou \cite[Proposition 2.1]{KaS} and the comments after it). We recall the argument for Theorem~\ref{prop-Ball}, together with the characterization of the equality case in Section~\ref{secBallTheorem}. 

Our main goal is to confirm Ball's conjecture (Conjecture \ref{conj-ball}) in full generality.

\begin{theorem}
\label{thm-Ball-conj}
If $K$ is a symmetric convex body in $\R^n$, then
    $$\int_K\int_{K^\circ}\langle x,y\rangle^2\, dx\, dy\leq \int_{B_2^n}\int_{ B^n_2}\langle x,y\rangle^2\, dx\, dy,$$
 with equality if and only if $K$ is an origin symmetric ellipsoid.
\end{theorem}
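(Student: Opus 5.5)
The plan is to deduce Theorem~\ref{thm-Ball-conj} from the directional inequality stated in the abstract: for symmetric isotropic $K$ and every $u$,
$$\int_K\langle x,u\rangle^2dx\int_{K^\circ}\langle x,u\rangle^2dx\le\Big(\int_{B_2^n}\langle x,u\rangle^2dx\Big)^2 .$$

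\textbf{Reduction to the isotropic case.} The functional $\Phi(K)=\int_K\int_{K^\circ}\langle x,y\rangle^2\,dx\,dy$ is $GL(n)$-invariant. Indeed, by \eqref{polar-linear-map}, the substitution $x=Tx'$, $y=T^{-t}y'$ has Jacobian $|\det T|\cdot|\det T|^{-1}=1$ and preserves $\langle x,y\rangle$. So we may assume $K$ is in isotropic position, i.e.\ $\int_K x\otimes x\,dx=c\,\mathrm{Id}$. Writing $\langle x,y\rangle^2=\sum_{i,j}x_ix_jy_iy_j$ gives
$$\Phi(K)=\mathrm{tr}\big(M_KM_{K^\circ}\big),\qquad M_L=\int_L x\otimes x\,dx .$$
When $M_K=c\,\mathrm{Id}$ this equals $c\,\mathrm{tr}M_{K^\circ}=\sum_j\int_K x_j^2\int_{K^\circ}x_j^2$. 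Applying the directional inequality with $u=e_j$ and summing over $j$ yields $\Phi(K)\le\Phi(B_2^n)$.

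\textbf{Equality.} If equality holds, then every directional inequality is an equality. The stated equality characterization then forces $K$ to be a ball, so the original body is an ellipsoid. Conversely, ellipsoids are $GL(n)$-images of the ball and therefore give equality.

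\textbf{The directional inequality.} The real work is proving the directional inequality for isotropic symmetric $K$; this is the main obstacle. I would try to reduce it to Theorem~\ref{prop-Ball} by a symmetrization that makes $K$ unconditional with respect to a basis containing $u$. The candidate is Steiner symmetrization in directions orthogonal to $u$, or a suitable shadow system. The steps would be:
\begin{enumerate}
\item Show that $\int_K\langle x,u\rangle^2$ is preserved, or increased appropriately, under symmetrization along hyperplanes $v^\perp$ with $v\perp u$. Steiner symmetrization in direction $v$ preserves the distribution of the coordinate $\langle x,u\rangle$ when $v\perp u$.
\item Show that $\int_{K^\circ}\langle x,u\rangle^2$ does not decrease. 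This should follow from a Meyer--Pajor / Campi--Gronchi-type argument: the function $t\mapsto 1/\int_{(K_t)^\circ}\langle x,u\rangle^2dx$ is convex along a shadow system in a direction $v\perp u$, by the same fibrewise argument that proves Santal\'o via shadow systems, applied to the weight $\langle x,u\rangle^2$, which is constant along the moving direction.
\item After finitely many such symmetrizations, pass to a limit that is symmetric under reflections in $e_2,\dots,e_n$ with $e_1=u$. It is already symmetric in $u$ because $K=-K$, so the limit is unconditional.
\item Apply Theorem~\ref{prop-Ball} with $X=L\cap\R^n_+$ and $Y=L^\circ\cap\R^n_+$, multiplied by $2^{2n}$.
\end{enumerate}

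The subtle point is that isotropy is needed somewhere. The directional inequality fails for general $K$: a flat ellipsoid in direction $u$ breaks it. So the reduction must keep track of the normalization, not just the two integrals. I would first attempt to prove the product inequality along the symmetrization path for the quantity
$$\int_K\langle x,u\rangle^2\int_{K^\circ}\langle x,u\rangle^2\;\cdot\;\frac{|K|\,|K^\circ|}{\cdots}$$
suitably normalized. Failing that, I would use the $GL$-invariance: optimize over linear images fixing $u$ to reach a position where symmetrization applies, and check that isotropy corresponds to that optimum by a first-variation argument.
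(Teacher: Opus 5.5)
Your reduction is exactly how the paper derives Theorem~\ref{thm-Ball-conj} from Theorem~\ref{thm-main}: $GL(n)$-invariance, $\Phi(K)=\mathrm{tr}(M_KM_{K^\circ})$, summing the directional inequality over $u=e_1,\dots,e_n$, and the equality case. The gap is in the directional inequality itself, which carries all of the content. There your Step~2 is false.

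Take $n=2$, $u=e_1$, $T_t(x_1,x_2)=(x_1,x_2+tx_1)$ and $K_t=T_tB_2^2$. This is a shadow system in the direction $e_2\perp u$, and the Steiner symmetral of $K_t$ in direction $e_2$ is $B_2^2$. Then $\int_{K_t}x_1^2\,dx=\int_{B_2^2}x_1^2\,dx$, while $K_t^\circ=T_t^{-t}B_2^2$ gives
\[
\int_{K_t^\circ}x_1^2\,dx=\int_{B_2^2}\langle y,T_t^{-1}e_1\rangle^2\,dy=(1+t^2)\int_{B_2^2}x_1^2\,dx .
\]
So symmetrizing orthogonally to $u$ strictly \emph{decreases} the polar moment, and $t\mapsto 1/\int_{K_t^\circ}x_1^2\,dx$ is concave near $t=0$, not convex.

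The Meyer--Pajor fibre argument only yields $(S_vK)^\circ_s\supseteq\frac12(K^\circ_s-K^\circ_s)$ for the slices $K^\circ_s=\{z\in v^\perp:z+sv\in K^\circ\}$. By Brunn--Minkowski this controls the volume of the slices, but not their moment $\int\langle z,u\rangle^2\,dz$ about the origin. The reason is that $K^\circ_s$ can be off-centre in the $u$-direction while $\frac12(K^\circ_s-K^\circ_s)$ is recentred; in the example the slices are centred at $-ts\,e_1$.

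This failure is forced. Steps~1--4 never use isotropy, so they would prove the directional inequality for every symmetric $K$. But $K_t$ violates it, since its product is $(1+t^2)\big(\int_{B_2^2}x_1^2\,dx\big)^2$. Note that a sheared ellipsoid breaks it, not an axis-aligned flat one; ellipsoids having $u$ as a principal axis give equality. Your fallback paragraph specifies no functional and proves no monotonicity, so it does not close the gap.

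The idea you are missing is that $K$ never has to be made unconditional. Theorem~\ref{prop-Ball} applies to arbitrary $X,Y\subseteq\R^n_+$ with $\langle x,y\rangle\le 1$, so it can be used cone by cone. The paper proceeds as follows.

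First, it takes a Yao--Yao equipartition $\mathcal P$ of $\mathbf 1_K\langle x,u\rangle^2\,dx$ based on $u^\perp$, centred at $o$, with axis $v$ (Theorem~\ref{thm old A}). It then applies the shear $T$ with $T|_{u^\perp}=\mathrm{id}$ and $Tv=\langle u,v\rangle u$. This preserves $\int_K\langle x,u\rangle^2$. By \eqref{v-vs-e1-in-polar-body} and isotropy of $K^\circ$, it also gives $\int_{(TK)^\circ}\langle x,u\rangle^2=\langle u,v\rangle^{-2}\int_{K^\circ}\langle x,v\rangle^2\ge\int_{K^\circ}\langle x,u\rangle^2$. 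This is the only use of isotropy, and it pays for exactly a shear like the one above.

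Second, on each cone $B$ of $T\mathcal P$, it uses a volume-preserving linear map $S$ preserving $u$ and $u^\perp$ with $SB=\R^n_+$. Then $S^{-t}B^*=\R^n_+$, and $\langle x,u\rangle^2$ is unchanged on both sides. Theorem~\ref{prop-Ball} therefore bounds $\int_{B\cap TK}\langle x,u\rangle^2\int_{B^*\cap(TK)^\circ}\langle x,u\rangle^2$ by $2^{-2n}\big(\int_{B_2^n}\langle x,u\rangle^2\big)^2$.

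Third, the dual cones also partition $\R^n$, and all $2^n$ pieces $B\cap TK$ have the same $\langle x,u\rangle^2$-mass. Hence the product of the two full integrals equals $2^n$ times the sum of these cone-wise products, which gives \eqref{eq-thm-main}. The equality case additionally requires gluing the cone-wise ellipsoids across neighbouring cones by real-analyticity.
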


In fact, we prove a slightly stronger result in Theorem~\ref{thm-main}. In this paper, we say that a symmetric convex body $L\subseteq\R^n$ is isotropic  (see  Milman, Pajor \cite{Mi-Pa} or Artstein-Avidan, Giannopoulos, Milman \cite{AGM15,AGM21})  if 
\begin{equation}
\label{quasi-iso-u}
\int_L \langle x,u\rangle^2\,dx=\frac{\|u\|^2}{n}\int_{L}\|x\|^2 \,dx\qquad\forall u\in\R^n.
\end{equation}
Note that unlike in the usual definition of isotropicity, we do not require any normalization on $L$. 

\begin{theorem}
\label{thm-main}
Let $K\subseteq \R^n$ be a symmetric convex body, and let $u\in \R^n\setminus\{o\}$. If $K^\circ$ (or equivalently, if $K$) is isotropic, then 
\begin{equation}\label{eq-thm-main}
  \int_{K}\langle x,u\rangle^2\, dx\int_{K^\circ}\langle x,u\rangle^2\, dx\leq \left(\int_{B_2^n}\langle x,u\rangle^2\, dx\right)^2,
\end{equation}
with equality if and only if $K$ is a Euclidean ball centered at the origin.
\end{theorem}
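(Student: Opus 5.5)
The plan is to reduce \eqref{eq-thm-main} to Ball's unconditional result, Theorem~\ref{prop-Ball}, by Steiner symmetrizing in directions orthogonal to $u$. By rotation invariance of both sides, and since they are homogeneous of the same degree in $u$, we may assume $u=e_1$. For $v\perp e_1$ let $S_v$ denote Steiner symmetrization with respect to $v^\perp$. I will use two monotonicity properties, both for $v\perp e_1$.

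\textbf{(a)} $\int_{S_vK}x_1^2\,dx=\int_K x_1^2\,dx$. This is immediate from Fubini: every chord of $K$ parallel to $v$ has constant $x_1$-coordinate, and $S_v$ only translates chords along $v$, keeping their lengths.

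\textbf{(b)} $\int_{(S_vK)^\circ}x_1^2\,dx\ge\int_{K^\circ}x_1^2\,dx$. This is the fiberwise Meyer--Pajor argument. Write points as $(y,t)$ with $y\in v^\perp$ and $t\in\R$, and let $A(y)=\{t:(y,t)\in K^\circ\}$. One checks from the definition of polarity and of $S_v$ that the fiber of $(S_vK)^\circ$ over $y$ contains $\frac12\bigl(A(y)-(-A(-y))\bigr)$. Symmetry of $K^\circ$ gives $-A(-y)=A(y)$, so this set is $\frac12(A(y)-A(y))$, whose length equals $|A(y)|$. Integrating $x_1^2$, which is constant on each such fiber, over $y$ yields (b). Hence the left side of \eqref{eq-thm-main} does not decrease under $S_v$.

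Apply $S_{e_2},\dots,S_{e_n}$ successively and call the result $L$. It is symmetric with respect to each hyperplane $e_i^\perp$, $i\ge2$: $S_{e_i}$ creates this symmetry, and later symmetrizations in orthogonal directions preserve it. $L$ is also origin symmetric, and composing $-\mathrm{Id}$ with the reflections in $e_2^\perp,\dots,e_n^\perp$ gives the reflection in $e_1^\perp$. Therefore $L$ is unconditional, and so is $L^\circ$. Now apply Theorem~\ref{prop-Ball} with $X=L\cap\R^n_+$, $Y=L^\circ\cap\R^n_+$ and $j=1$. Multiplying by $2^{2n}$ and using unconditionality gives
$$\int_K x_1^2\int_{K^\circ}x_1^2\le \int_L x_1^2\int_{L^\circ}x_1^2\le\Bigl(\int_{B_2^n}x_1^2\Bigr)^2 .$$

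\textbf{Equality.} Equality forces $L$ to be an unconditional ellipsoid, by the equality case of Theorem~\ref{prop-Ball}, and forces equality in every step of (b). The main obstacle is to propagate this back to $K$. I would work in the last step first: equality in (b) means that for a.e.\ $y$ the fiber of $(S_vK)^\circ$ over $y$ has exactly the length of $\frac12(A(y)-A(y))$. From there I would aim to show that $K$ is an ellipsoid, either through the Meyer--Pajor equality analysis or by repeating the argument for all orthonormal frames $\{u,v_2,\dots,v_n\}$. Once $K$ is known to be an ellipsoid $TB_2^n$, the isotropy assumption forces $\int_K\langle x,u\rangle^2$ to be proportional to $\|u\|^2$, so $TT^t$ is a multiple of the identity and $K$ is a centered Euclidean ball. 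Conversely, balls clearly give equality.

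I expect isotropy to be essential precisely in this identification step. I would also double-check the fiber inclusion in (b) carefully. If that inclusion fails without extra hypotheses, the fallback is to replace Steiner symmetrization by a shadow-system (Campi--Gronchi) convexity argument along $t\mapsto K_t$. Isotropy would then be used to control the sign of the derivative at the ball.
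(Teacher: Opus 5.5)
Step (a) and the reduction to an unconditional body $L$ are fine. The gap is step (b): it is false, and the fiber inclusion you propose to check does fail.

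Take $n=2$, $v=e_2$ and $K=T[-1,1]^2=\{|x_1|\le 1,\ |x_2-x_1|\le 1\}$ with $T(a,b)=(a,a+b)$. Then:
\begin{itemize}
\item $S_vK=[-1,1]^2$ and $(S_vK)^\circ=\{|y_1|+|y_2|\le 1\}$;
\item $K^\circ=\{|y_1|\le 1,\ |y_1+2y_2|\le 1\}$;
\item over $y_1=1$ the fiber of $K^\circ$ is $A=[-1,0]$, so $\frac12(A-A)=[-\frac12,\frac12]$, while the fiber of $(S_vK)^\circ$ is $\{0\}$;
\item integrating, $\int_{(S_vK)^\circ}y_1^2\,dy=\frac13<\frac23=\int_{K^\circ}y_1^2\,dy$.
\end{itemize}
What Meyer and Pajor actually prove is an inclusion for the sections orthogonal to $v$, namely $K^\circ_s=\{y\in v^\perp:\ y+sv\in K^\circ\}$. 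For these one has $(S_vK)^\circ_s\supseteq\frac12(K^\circ_s+K^\circ_{-s})=\frac12(K^\circ_s-K^\circ_s)$. Via Brunn--Minkowski this gives $|(S_vK)^\circ|\ge|K^\circ|$. However, $x_1^2$ is not constant on these sections. Replacing an off-center section by its centered difference body can lower the $x_1^2$-moment. In the example, $K^\circ_s=[-1,1-2s]e_1$ is centered at $-se_1$, and exactly this happens.

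More fundamentally, your argument for the inequality never uses isotropy. So it would prove \eqref{eq-thm-main} for every symmetric convex body, which is false. For the ellipse $E=TB_2^2$ (same $T$), write $c=\int_{B_2^2}x_1^2\,dx$. Then $\int_E x_1^2\,dx=\|T^te_1\|^2c=c$ and $\int_{E^\circ}x_1^2\,dx=\|T^{-1}e_1\|^2c=2c$, so the product is $2c^2$. Any correct proof must use isotropy in the inequality itself, not only in the equality case.

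The paper does this as follows.
\begin{itemize}
\item Take a Yao--Yao equipartition of $\mathbf{1}_K x_1^2\,dx$ based on $e_1^\perp$ with axis $v$.
\item Apply the shear that fixes $e_1^\perp$ and sends $v$ to $\langle v,e_1\rangle e_1$. It preserves $\int_K x_1^2$ and turns $\int_{K^\circ}x_1^2$ into $\langle v,e_1\rangle^{-2}\int_{K^\circ}\langle x,v\rangle^2$. Isotropy of $K^\circ$ makes this at least $\int_{K^\circ}x_1^2$, so the product can only increase.
\item Each cone $B$ of the sheared partition is mapped onto $\R^n_+$ by a linear map preserving $x_1$. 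Theorem~\ref{prop-Ball}, in its general form for arbitrary $X,Y\subseteq\R^n_+$ with $\langle x,y\rangle\le 1$, then bounds the product over $B\cap\widetilde K$ and $B^*\cap\widetilde K^\circ$.
\item Equipartition, together with the fact that the dual cones again partition $\R^n$, lets these $2^n$ bounds add up to the claimed one.
\end{itemize}
Your equality discussion and the shadow-system fallback are only sketches and do not supply this missing ingredient.
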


Our proof of Theorem \ref{thm-main} is inspired by Lehec's proof \cite{Leh09b} of a functional version of the classical Blaschke-Santal\'o inequality, originally established in Fradelizi-Meyer \cite{FrM}. As an immediate corollary of Theorem \ref{thm-main}, one has
\begin{equation}
\label{eq-moment-product}
\int_{K}\|x\|^2\,dx\cdot \int_{K^\circ}\|x\|^2\,dx\leq \left(\int_{B_2^n}\|x\|^2\,dx\right)^2,
\end{equation}
where $K$ (or $K^\circ$) is a symmetric isotropic convex body and equality holds  if and only if $K$ is a Euclidean ball centered at the origin.

We mention that, as observed in \cite[Proof of Proposition 3.1]{HL} (see also \cite{KaS}), one can deduce a functional version of Theorem \ref{thm-Ball-conj} by Theorem \ref{thm-Ball-conj} itself. This is done by adapting the proof of \cite[Proposition 3]{FrM} to the setting of Conjecture \ref{conj-ball}.
\begin{corollary}
Let $\rho:\R\to\R_+$ and $f,g:\R^n\to\R_+$ be even measurable functions satisfying $f(x)g(y)\leq \rho^2(\langle x,y\rangle)$, for all $x,y\in\R^n$ such that $\langle x,y\rangle>0$. Then,
\begin{equation*}
\int_{\R^n}\int_{\R^n}\langle x,y\rangle^2f(x)g(y)\, dx\, dy\leq   n^{-1}\left(\int_{\R^n}\|u\|^2\rho\left(\|u\|^2\right)\,du\right)^2.
\end{equation*}
Equality holds if and only if there exists a continuous function $\tilde{\rho}:\R_{+}\to\R_{+}$, satisfying the following. 
\vskip 1mm\noindent
{\bf a.} $\rho=\tilde{\rho}$ a.e., $\sqrt{\tilde\rho(s)\tilde\rho(t)}\le\tilde\rho(\sqrt{st})$ for every $s,t\geq 0$ 
and if $n\geq 2$, then $\tilde\rho(0)>0$ or $\tilde\rho|_{\R_+}\equiv 0$.
\vskip 1mm \noindent
{\bf b.}  For some positive definite $n\times n$ matrix $T$ and for some $d>0$, one has 
$$f(x) = d\tilde\rho( |Tx|^2)\hbox{ and } g(x) =\frac{1}{d}\tilde\rho(|T^{-1}x|^2)\quad a.e.$$
\end{corollary}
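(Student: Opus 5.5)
We follow the scheme of Fradelizi--Meyer \cite[Proposition 3]{FrM}: reduce to Theorem~\ref{thm-Ball-conj} on level sets, then combine the levels with a one-dimensional multiplicative Prékopa--Leindler type inequality.

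\emph{Set version of Theorem~\ref{thm-Ball-conj}.} First we extend Theorem~\ref{thm-Ball-conj} to pairs of symmetric measurable sets $A,B\subseteq\R^n$ such that $\langle x,y\rangle\le \lambda$ for all $x\in A$, $y\in B$. Set $K=\overline{\mathrm{conv}}\,A$ and assume it is a convex body. Since $\lambda^{-1}B\subseteq K^\circ$ and the integrand $\langle x,y\rangle^2$ is nonnegative, Theorem~\ref{thm-Ball-conj} together with homogeneity gives
$$\int_A\int_B\langle x,y\rangle^2\,dx\,dy\le \lambda^{n+2}\int_{B_2^n}\int_{B_2^n}\langle x,y\rangle^2\,dx\,dy=:\lambda^{n+2}C_n .$$
If $K$ has empty interior, then $A$ is null and the bound is trivial. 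An integration in polar coordinates also gives $C_n=n^{-1}\bigl(\int_{B_2^n}\|u\|^2du\bigr)^2$.

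\emph{Layer cake.} Write $f=\int_0^\infty\mathbf 1_{\{f>s\}}\,ds$ and similarly for $g$, and put $A_s=\{f>s\}$, $B_t=\{g>t\}$; these sets are symmetric because $f,g$ are even. By Fubini,
$$\int\!\!\int\langle x,y\rangle^2f(x)g(y)\,dx\,dy=\int_0^\infty\!\!\int_0^\infty\Phi(s,t)\,ds\,dt,\qquad \Phi(s,t)=\int_{A_s}\int_{B_t}\langle x,y\rangle^2 .$$
If $x\in A_s$, $y\in B_t$ and $\langle x,y\rangle>0$, then $\rho^2(\langle x,y\rangle)>st$. Replacing $\rho$ by its smallest majorant $\tilde\rho$ with $\sqrt{\tilde\rho(a)\tilde\rho(b)}\le\tilde\rho(\sqrt{ab})$ only enlarges the right-hand side after we check it changes nothing a.e. (this is where condition {\bf a} enters). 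The goal is then to bound $\sup_{A_s\times B_t}\langle x,y\rangle$ and feed the set inequality into a one-variable problem.

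\emph{Reduction to one dimension.} Concretely, set $a(r)=|\{f>\cdot\}|$-type radial profiles: define $F(r)$, $G(r)$ through the volumes of dilates, namely $A_s\subseteq R_f(s)\,K_s$, where $K_s$ is normalized by polarity against $B_t$. We then aim for the inequality $\int_0^\infty\!\int_0^\infty\Phi\le C_n\bigl(\int_0^\infty r^{n+1}h(r)\,dr\bigr)^2$ with $h(r)=\tilde\rho(r^2)$, up to the constant from polar coordinates. This will follow from the weighted multiplicative Prékopa--Leindler inequality used by Ball and in \cite{FrM}: if $h(\sqrt{rq})^2\ge a(r)b(q)$, then $\int r^{n+1}a\int r^{n+1}b\le(\int r^{n+1}h)^2$. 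The main obstacle is arranging the level-set parametrization so that the hypothesis $f(x)g(y)\le\rho^2(\langle x,y\rangle)$ yields exactly this pointwise condition on the one-dimensional profiles. If the direct route fails, I would first reduce, via Fubini over lines, to $f(x)=\sup\{s:x\in A_s\}$ with convex level sets, replacing $f$ and $g$ by the functions with level sets $\overline{\mathrm{conv}}\,A_s$ and $(\overline{\mathrm{conv}}\,A_s)^\circ$, which only increases the left side.

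\emph{Equality.} We trace equality through both steps. Equality in Theorem~\ref{thm-Ball-conj} forces a.e. level set $A_s$ to be a centered ellipsoid and $B_t$ the correspondingly scaled polar ellipsoid. Compatibility across levels forces a common positive definite $T$, and equality in the one-dimensional lemma forces the profiles to be $d\,\tilde\rho$ and $d^{-1}\tilde\rho$. This gives {\bf b}; the normalization $\tilde\rho(0)>0$ for $n\ge2$ comes from requiring that the ellipsoids not degenerate.
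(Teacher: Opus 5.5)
There is a genuine gap, and it is the step you yourself call the main obstacle: the layer-cake decomposition cannot be made to work.

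\textbf{Why the level-set route fails.} With $A_s=\{f>s\}$ and $B_t=\{g>t\}$, the quantity
$\Phi(s,t)=\int_{A_s}\int_{B_t}\langle x,y\rangle^2\,dx\,dy=\mathrm{tr}(M(s)N(t))$, where $M(s)=\int_{A_s}xx^t\,dx$ and $N(t)=\int_{B_t}yy^t\,dy$, is not of the form $a(s)b(t)$.
\begin{itemize}
\item The only information your set inequality gives is $\Phi(s,t)\le C_n\psi(st)^{n+2}$, with $\psi(w)=\sup\{u>0:\rho^2(u)>w\}$. This cannot be fed into any Pr\'ekopa--Leindler-type inequality, which needs product form.
\item Integrating the pointwise bound directly is useless: for $\rho=\mathbf 1_{[0,1]}$ it gives $\int_0^\infty\!\int_0^\infty\mathbf 1_{\{st<1\}}\,ds\,dt=\infty$.
\item For the functional Santal\'o inequality the analogous level-set quantity $|A_s|\,|B_t|$ is a product, but the weight $\langle x,y\rangle^2$ destroys this.
\item Splitting into $\sum_i\int x_i^2f\int y_i^2g$ does not help either. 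The level sets cannot all be made isotropic simultaneously, and the per-direction bound of Theorem~\ref{thm-main} needs isotropicity.
\item Even with a product structure, bounding $\sup_{A_s\times B_t}\langle x,y\rangle$ by $\psi(st)$ only sees the smallest non-increasing majorant $\bar\rho$ of $\rho$. You would end with $\int\|u\|^2\bar\rho(\|u\|^2)\,du$ on the right, which is strictly larger for non-monotone $\rho$ (e.g.\ $\rho=\mathbf 1_{[1,2]}$, $\bar\rho=\mathbf 1_{[0,2]}$).
\item Your remedy of replacing $\rho$ by a majorant goes the wrong way. Condition {\bf a} is part of the equality characterization, not a hypothesis. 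For general $\rho$ the majorant differs from $\rho$ on a set of positive measure, so the resulting bound is weaker than the one claimed.
\end{itemize}

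\textbf{The route the paper invokes.} The argument of \cite{FrM}, as adapted in \cite{HL}, decomposes along rays instead of level sets; your one-dimensional lemma then applies verbatim.
\begin{itemize}
\item Let $K_f,K_g$ be the symmetric star bodies with radial functions $r_{K_f}(\theta)^{n+2}=\int_0^\infty r^{n+1}f(r\theta)\,dr$ and $r_{K_g}(\eta)^{n+2}=\int_0^\infty q^{n+1}g(q\eta)\,dq$.
\item Polar coordinates give $\int\!\!\int\langle x,y\rangle^2f(x)g(y)\,dx\,dy=(n+2)^2\int_{K_f}\int_{K_g}\langle x,y\rangle^2\,dx\,dy$.
\item If $c=\langle\theta,\eta\rangle>0$, the hypothesis gives $r^{n+1}f(r\theta)\cdot q^{n+1}g(q\eta)\le\gamma(\sqrt{rq})^2$ with $\gamma(w)=w^{n+1}\rho(cw^2)$.
\item The multiplicative Pr\'ekopa--Leindler inequality then yields $c\,r_{K_f}(\theta)\,r_{K_g}(\eta)\le M^{2/(n+2)}$, where $M=\int_0^\infty v^{n+1}\rho(v^2)\,dv$. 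Thus $\langle x,y\rangle\le M^{2/(n+2)}$ on $K_f\times K_g$, with no monotonicity of $\rho$ used.
\item Your set version of Theorem~\ref{thm-Ball-conj}, applied to $\overline{\mathrm{conv}}\,K_f$, gives the bound $(n+2)^2M^2C_n=n^{-1}(n\omega_nM)^2=n^{-1}(\int_{\R^n}\|u\|^2\rho(\|u\|^2)\,du)^2$.
\end{itemize}

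\textbf{Equality.} The analysis must run through equality in Theorem~\ref{thm-Ball-conj} ($\overline{\mathrm{conv}}\,K_f$ an ellipsoid, $K_g$ a multiple of its polar) and equality in the one-dimensional inequality along almost every pair of rays. It does not go through level sets being ellipsoids. For $n\ge2$, the requirement $\tilde\rho(0)>0$ or $\tilde\rho\equiv0$ is forced by the hypothesis on extremal pairs with $\langle x,y\rangle\to0^+$, which makes $\tilde\rho$ non-increasing. It does not come from nondegeneracy of ellipsoids.
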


Fusco, Maggi, Pratelli \cite{FMP08} proved an optimal stability version of the Isoperimetric inequality in terms of the symmetric difference metric, which result was extended to the Anisotropic Isoperimetric inequality (or equivalently, to the Brunn-Minkowski inequality) by Figalli, Maggi, Pratelli \cite{FMP10}.
For symmetric convex bodies $K,C\subseteq\R^n$, their homothetic distance, considered, for example, by
Figalli, Maggi, Pratelli \cite{FMP10} and 
 Fusco, Maggi, Pratelli \cite{FMP08}, is
$$
A(K,C)=|\alpha K\Delta \beta C|\mbox{ \ where }\alpha=|K|^{-\frac1n}\mbox{ and }\beta=|C|^{-\frac1n}.
$$
Here $|\alpha K|=|\beta C|=1$. It is easy to see that the Banach-Mazur distance
$$
d(K,B^n_2)=\min\{\log \lambda:\,\lambda\geq 1\mbox{ and }\exists\Phi\in{\rm GL}(n),\;\Phi K\subseteq B^n_2\subseteq \lambda\Phi K\}
$$
of $K$ and $B^n_2$ satisfies
\begin{equation}
\label{BM-Fraenkel}
d(K,B^n_2)\leq C_nA(K,E)^{\frac2{n+1}}
\end{equation}
for any centered ellipsoid $E$ where $C_n>0$ depends on $n$. We are ready to state the stability version Theorem~\ref{Ball-ineq-stab0}  of Theorem~\ref{thm-Ball-conj}.

\begin{theorem}
\label{Ball-ineq-stab0}
If $K\subseteq \R^n$ is a symmetric convex body, then there exists an $o$-symmetric ellipsoid $E\subseteq\R^n$ such that
$$\int_K\int_{K^\circ}\langle x,y\rangle^2\,dydx\leq \left(1-\theta_n A(K,E)^2\right) \int_{B^n_2}\int_{B^n_2}\langle x,y\rangle^2\,dydx$$
where $\theta_n>0$ is an explicit constant depending only on $n$.
\end{theorem}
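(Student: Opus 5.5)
We reduce to the isotropic setting and then quantify the argument behind Theorem~\ref{thm-main}. Both sides of the inequality and $A(K,E)$ are invariant under $K\mapsto TK$, $E\mapsto TE$ with $T\in GL(n)$. Indeed, by \eqref{polar-linear-map}, $\int_{TK}\int_{(TK)^\circ}\langle x,y\rangle^2\,dy\,dx=\int_K\int_{K^\circ}\langle Tx,T^{-t}y\rangle^2\,dy\,dx$, and $A$ is affine invariant. So we may assume $K$ is isotropic. In that case
$$\int_K\int_{K^\circ}\langle x,y\rangle^2\,dy\,dx=\frac1n\int_K\|x\|^2dx\int_{K^\circ}\|y\|^2dy,$$
and the right-hand side is the product in \eqref{eq-moment-product}. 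We take $E$ to be a centered Euclidean ball and aim to prove
$$\int_K\langle x,u\rangle^2dx\int_{K^\circ}\langle x,u\rangle^2dx\le(1-\theta_nA(K,B^n_2)^2)\Big(\int_{B_2^n}\langle x,u\rangle^2dx\Big)^2$$
for a single fixed direction $u=e_1$.

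The core step is to revisit the proof of Theorem~\ref{thm-main}, which follows Lehec's approach. We expect it to split $K$ and $K^\circ$ by hyperplanes through $o$ (for instance via successive bisections), apply Ball's Theorem~\ref{prop-Ball} to pairs of pieces after a linear change of coordinates, and then use isotropy to compare against the ball. We would track the deficit at each step. At the final stage, Theorem~\ref{prop-Ball} is applied to sets $X\subseteq K$, $Y\subseteq K^\circ$ in an orthant, and its equality case forces $X=E\cap\R^n_+$, $Y=E^\circ\cap\R^n_+$. We therefore need a quantitative form of Theorem~\ref{prop-Ball}: the deficit in \eqref{eq-unc-ball} should control the symmetric difference of $X$ with some $E\cap\R^n_+$ to the second power. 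We would obtain this by rerunning Ball's proof, which rests on a one-dimensional Prékopa--Leindler/Brunn--Minkowski type inequality for the functions $t\mapsto\int_{X\cap\{x_j=t\}}$, now with the stable Prékopa--Leindler inequality (Ball--Böröczky or Figalli--van Hintum--Tiba type results) in place of the sharp one. Because the relevant one-dimensional inequalities have quadratic deficits, the exponent $2$ comes out naturally.

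Once each piece of $K$ is $L^1$-close (relative to volume) to the corresponding piece of an ellipsoid, we have to glue these pieces. The isotropy constraints then force the ellipsoids from different orthants or bisections to agree with one Euclidean ball, up to an error controlled by the deficit. Convexity of $K$ and symmetry are used to pass from closeness on pieces to $|\alpha K\,\Delta\,\beta B^n_2|\lesssim \sqrt{\delta}$, where $\delta$ is the relative deficit. The constant $\theta_n$ comes from tracking the dimension-dependent constants in these steps.

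The main obstacle is the quantitative version of Ball's theorem with a quadratic rather than worse exponent, in particular controlling $X$ itself and not merely a functional of it. Our first attempt would be to linearize around the equality case. Write the radial function of $K$ as $1+\varphi$, use $h_{K^\circ}$ against it, and expand the product of second moments to second order. Isotropy kills the first-order terms, and the second-order form should be negative definite modulo linear maps, giving the quadratic gap near the ball. Far from the ball, compactness together with Theorem~\ref{thm-Ball-conj}'s strict inequality gives a positive gap. This combination yields the stated bound with a (possibly less explicit) $\theta_n$.
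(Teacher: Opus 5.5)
You locate the main difficulty in the wrong place, and the step that actually carries the paper's stability proof is missing. In the paper, the quantitative version of Theorem~\ref{prop-Ball} is short. One applies the sharp $L^1$-stability of Pr\'ekopa--Leindler (Theorem~\ref{PLhstab}) to the functions \eqref{f-function}--\eqref{h-function}, i.e.\ the $n$-dimensional inequality after $x_i=e^{t_i}$, not a one-dimensional argument. Only a sharp $\sqrt{\varepsilon}$ estimate of this kind gives the exponent $2$. What this yields, for a Yao--Yao equipartition of $\langle x,u\rangle^2\mathbf{1}_K\,dx$, is a \emph{separate} ellipsoid $E_A$ for each of the $2^n$ cones $A$ (Proposition~\ref{thm-main-stab}). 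Your claim that ``the isotropy constraints force the ellipsoids from different orthants to agree'' does no work: isotropy is a single matrix identity for the whole body and cannot pin down $2^n$ local ellipsoids. Even in the equality case of Theorem~\ref{thm-main}, the paper needs further partitions and real-analyticity of $\partial E$ to do this. The missing ingredients are:
\begin{enumerate}
\item a quantitative rigidity lemma: two ellipsoids with bounded in- and circumradii that are $\delta$-close on a cone of inradius $\ge s$ are $C\delta$-close globally (Lemma~\ref{EEprime-sandwiched});
\item uniform control of the Yao--Yao cones (Proposition~\ref{prop-op} and \eqref{inradius-cone-inYaoYao});
\item control of the axes, $\langle u,v\rangle\ge\sqrt{1-n\varepsilon}$.
\end{enumerate}
Together these ensure that cones from partitions based on several hyperplanes $u_i^\perp$ overlap one fixed cone $\sigma_0$ substantially. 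Then every local ellipsoid can be compared with a single $E_0$.

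Second, you ignore that the weight degenerates. The stability estimate controls $X$ only in the measure $x_1^2\,dx$, so direction $u$ alone gives nothing inside $\Theta_{u,\Lambda}$ beyond a $\Lambda^{-2}$ loss. The paper fixes this with $n$ directions satisfying $\bigcap_i\Theta_{u_i,\Lambda}\subseteq\tau\Lambda B^n_2\subseteq K\cap E_0$ for small $\Lambda$. This is legitimate because the total deficit $\varepsilon$ bounds every directional deficit by $n\varepsilon$, see \eqref{Kxu-neps-cond}. Your reduction to one fixed direction $u=e_1$ throws away exactly this information and aims at a strictly stronger statement.

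Your fallback, ``linearize near $B^n_2$, compactness far away'', does not close as stated. Compactness only gives a gap at a fixed positive distance from ellipsoids, and with a non-explicit constant, whereas the statement asserts an explicit $\theta_n$. A second-order expansion with remainder $o(A(K,E)^2)$ is not available for convex bodies that are merely Hausdorff-close to $B^n_2$, because the moments of $K^\circ$ depend on $h_K$, which is tied to $\rho_K$ nonlocally. Bridging that intermediate regime is the whole difficulty; it is the analogue of what required symmetrization in Fusco--Maggi--Pratelli.

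Also, isotropy does not ``kill the first-order terms''. With $F_u(K)=\int_K\langle x,u\rangle^2dx\int_{K^\circ}\langle y,u\rangle^2dy$, the first variation at $B^n_2$ vanishes regardless. Isotropy matters for the sign of the second variation in ellipsoidal directions, since for unit $u$ one has $F_u(TB^n_2)=\|T^tu\|^2\|T^{-1}u\|^2F_u(B^n_2)\ge F_u(B^n_2)$.
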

The proof of Theorem ~\ref{Ball-ineq-stab0} combines the method leading to Theorem ~\ref{thm-main} and the statement of the theorem itself with novel ideas. Theorem~\ref{Ball-ineq-stab0} implies the stability of various additional linear invariant fundamental inequalities in terms of the symmetric difference metrics.
Stability versions of the Blaschke-Santal\'o inequality and the Affine Isoperimetric inequality (cf. \eqref{affine-iso}) have been known in terms of the "easier to handle" Banach-Mazur-distance (see Ball, B\"or\"oczky \cite{BaB11}, B\"or\"oczky \cite{Bor10} and Ivaki \cite{Iva15}), but this is the first instance, following the footsteps of Fusco, Maggi, Pratelli \cite{FMP08} and Figalli, Maggi, Pratelli \cite{FMP10}, to provide a stability version symmetric difference metrics, which is the most natural distance in this respect. 

\begin{theorem}
\label{BS-ineq-stab}
If $K\subseteq \R^n$ is a symmetric convex body, then there exists an $o$-symmetric ellipsoid $E\subseteq\R^n$ such that
$$|K|\cdot |K^\circ|\leq \left(1-\theta_n A(K,E)^2\right) \omega_n^2,$$
where $\theta_n>0$ is an explicit constant depending only on $n$.
\end{theorem}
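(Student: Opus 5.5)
The plan is to deduce Theorem~\ref{BS-ineq-stab} from Theorem~\ref{Ball-ineq-stab0}, following Ball's argument that \eqref{eq-conj} implies \eqref{eq-san}. Both sides of the desired inequality and $A(K,E)$ are invariant under $GL(n)$; for $A(K,E)$ note that $A(TK,TE)=A(K,E)$, since $A$ is defined via volume normalization. So I first put $K$ in a convenient position: apply $T\in GL(n)$ so that $K$ is isotropic. Then, by \eqref{polar-linear-map}, I write $\int_{K}\int_{K^\circ}\langle x,y\rangle^2\,dx\,dy$ in terms of second moments. Isotropicity of $K$ gives
$$\int_K\int_{K^\circ}\langle x,y\rangle^2\,dx\,dy=\frac1n\int_K\|x\|^2dx\int_{K^\circ}\|y\|^2dy .$$

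The key step is to lower bound second moments by volumes. For any star body $L$, the Euclidean ball of the same volume minimizes $\int_L\|x\|^2dx$ (bathtub principle/rearrangement). Hence
$$\int_L\|x\|^2dx\ge \frac{n}{n+2}\,\omega_n^{-2/n}|L|^{1+2/n}.$$
Applying this to $K$ and $K^\circ$ and using $\int_{B_2^n}\int_{B_2^n}\langle x,y\rangle^2=\frac1n\big(\frac{n\omega_n}{n+2}\big)^2$, I obtain
$$\frac{\int_K\int_{K^\circ}\langle x,y\rangle^2}{\int_{B_2^n}\int_{B_2^n}\langle x,y\rangle^2}\ \ge\ \left(\frac{|K||K^\circ|}{\omega_n^2}\right)^{1+2/n}.$$
This is exactly Ball's reduction, and it is linear invariant once stated without the isotropic normalization: the right side is invariant, and the left side is $GL(n)$-invariant by \eqref{polar-linear-map}.

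Combining with Theorem~\ref{Ball-ineq-stab0}, which provides an ellipsoid $E$, gives
$$\left(\frac{|K||K^\circ|}{\omega_n^2}\right)^{1+2/n}\le 1-\theta_nA(K,E)^2 .$$
Raising to the power $n/(n+2)\le 1$ and using Bernoulli's inequality in the form $(1-t)^{s}\le 1-st$ for $s\in(0,1]$, $t\in[0,1]$, I get
$$|K||K^\circ|\le\Big(1-\tfrac{n}{n+2}\theta_nA(K,E)^2\Big)\omega_n^2 .$$
This is the claim with constant $\frac{n}{n+2}\theta_n$, which is again explicit.

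The main subtlety is whether the ellipsoid from Theorem~\ref{Ball-ineq-stab0} can be taken for $K$ itself rather than for $TK$. This is handled by transporting it back: $E=T^{-1}E'$, together with the invariance $A(K,T^{-1}E')=A(TK,E')$. A second point to verify is the moment-of-inertia inequality for the ball of equal volume. This is elementary: $\|x\|^2$ is radially increasing, so moving mass of $L\setminus rB_2^n$ into $rB_2^n\setminus L$ decreases the integral. No other obstacle is expected, since the heavy lifting is in Theorem~\ref{Ball-ineq-stab0}.
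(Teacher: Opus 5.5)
Your proposal is correct and is essentially the paper's argument. The paper likewise combines Theorem~\ref{Ball-ineq-stab0} with Ball's reduction $(|K|\,|K^\circ|)^{\frac{n+2}{n}}\le n\gamma_n^2\int_K\int_{K^\circ}\langle x,y\rangle^2\,dx\,dy$, obtained from the equal-volume ball bound on second moments in isotropic position (it normalizes $K^\circ$ rather than $K$, which makes no difference); your Bernoulli step giving the constant $\frac{n}{n+2}\theta_n$ and the $GL(n)$-invariance of $A(\cdot,\cdot)$ just make explicit what the paper summarizes as ``directly yields''.
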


Here the stability estimate with respect to the Banach-Mazur distance after Theorem~\ref{BS-ineq-stab} is an improvement on the best one previously known provided by Ball, B\"or\"oczky \cite{BaB11}.

 It is well-known (cf. Lutwak \cite{Lut93b}) that the Blaschke-Santal\'o inequality \eqref{eq-san} is equivalent to the Affine Isoperimetric inequality \eqref{affine-iso} (see also \cite{DoH95}, \cite{Lud10}, \cite{Lut91}, \cite{ScW90}). 
 For a symmetric convex body $K\subseteq\R^n$, 
 the Gaussian curvature $\kappa_K(x)\geq 0$ exists at $\mathcal{H}^{n-1}$ a.e. $x\in\partial K$ by Alexandrov's theorem, and the affine surface area 
 is defined as
\begin{align*}
\Omega(K)=&\int_{\partial K}\kappa_K(x)^{\frac1{n+1}}\,d\mathcal{H}^{n-1}(x)
\end{align*}
The Affine Isoperimetric inequality 
says that
\begin{equation}
\label{affine-iso}
\Omega(K)\leq n\omega_n^{\frac2{n+1}}|K|^{\frac{n-1}{n+1}}
\end{equation}
with equality 
if and only if $K$ is an ellipsoid. Now, Theorem~\ref{BS-ineq-stab} implies the first close to be optimal stability version of the Affine Isoperimetric inequality 
in terms of the symmetric volume difference.

\begin{corollary}
\label{Affine-ineq-stab}
If $K\subseteq \R^n$ is a symmetric convex body, then there exists an $o$-symmetric ellipsoid $E\subseteq\R^n$ such that
$$\Omega(K)\leq \left(1-\theta_n A(K,E)^2\right)^{\frac{1}{n+1}} n\omega_n^{\frac2{n+1}}|K|^{\frac{n-1}{n+1}}
$$
where $\theta_n>0$ is an explicit constant depending only on $n$.
\end{corollary}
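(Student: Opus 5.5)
Corollary~\ref{Affine-ineq-stab} follows from Theorem~\ref{BS-ineq-stab} through the classical link between affine surface area and polar volume. The bridge is Lutwak's inequality
$$\Omega(K)^{n+1}\leq n^{n+1}|K|^{n-1}\,|K^\circ|\cdot|K|,$$
which in Lutwak's notation reads $\Omega(K)^{n+1}\le n^{n+1}|K|^n|K^\circ|$ for symmetric $K$. Combining it with Theorem~\ref{BS-ineq-stab}, which gives $|K|\,|K^\circ|\le (1-\theta_nA(K,E)^2)\,\omega_n^2$, yields
$$\Omega(K)^{n+1}\le n^{n+1}|K|^{n-1}\bigl(1-\theta_nA(K,E)^2\bigr)\omega_n^2.$$
Taking $(n+1)$-st roots gives exactly the claimed bound, with the same ellipsoid $E$ and the same constant $\theta_n$.

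It remains to justify Lutwak's inequality $\Omega(K)^{n+1}\le n^{n+1}|K|^n|K^\circ|$. For symmetric $K$ the centroid of $K$ is at $o$, so the inequality applies with the polar taken about the origin. The derivation is as follows.

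\begin{enumerate}
\item Write $\Omega(K)=\int_{S^{n-1}} f_K(u)^{\frac{n}{n+1}}\,du$, where $f_K$ is the curvature function, i.e.\ the density of the absolutely continuous part of the surface area measure $S_K$. This is the standard reformulation via the Gauss map, valid $\mathcal{H}^{n-1}$-a.e.
\item Split the integrand as $f_K^{\frac n{n+1}}=(h_Kf_K)^{\frac n{n+1}}\,h_K^{-\frac n{n+1}}$ and apply H\"older's inequality with exponents $\frac{n+1}{n}$ and $n+1$:
$$\Omega(K)\le\Bigl(\int_{S^{n-1}} h_Kf_K\,du\Bigr)^{\frac n{n+1}}\Bigl(\int_{S^{n-1}} h_K^{-n}\,du\Bigr)^{\frac1{n+1}}.$$
\item Identify the two factors. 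Since $\int_{S^{n-1}} h_Kf_K\,du\le\int_{S^{n-1}} h_K\,dS_K=n|K|$ and $\int_{S^{n-1}} h_K^{-n}\,du=n|K^\circ|$, raising to the power $n+1$ gives $\Omega(K)^{n+1}\le n^{n+1}|K|^n|K^\circ|$.
\end{enumerate}

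The main obstacle is the measure-theoretic justification of the first step. Its key content is that the absolutely continuous part of $S_K$ captures $\kappa_K^{1/(n+1)}$ on $\partial K$, with singular parts contributing nothing. This is a known result (Hug, Lutwak) and I would cite it rather than reprove it.
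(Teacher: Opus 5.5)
Your proposal is correct and follows the paper's route: the paper also combines Lutwak's inequality $\Omega(K)\le n|K|^{\frac{n}{n+1}}|K^\circ|^{\frac{1}{n+1}}$ with Theorem~\ref{BS-ineq-stab}, keeping the same ellipsoid $E$ and constant $\theta_n$. The only difference is that the paper simply cites Lutwak's inequality, whereas you also sketch its standard H\"older-based proof, and that sketch is valid.
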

\noindent{\bf Remark.} Concerning the Banach-Mazur distance, it follows from \eqref{BM-Fraenkel} that in Theorems \ref{Ball-ineq-stab0} and \ref{BS-ineq-stab} and in Corollary \ref{Affine-ineq-stab}, the term $\theta_nA(K,E)^2$ may be replaced by $\tilde{\theta}_nd(K,B_2^n)^{n+1}$ where $\tilde{\theta}_n>0$ depends on $n$. The corresponding stability estimates with respect to the Banach-Mazur distance are improvements on the best ones previously known, proved by Ball, B\"or\"oczky \cite{BaB11}.\\

This paper can be naturally divided into two parts. The first part (Sections \ref{secBallTheorem}, \ref{secYaoYao}, \ref{secBall-conj}) is devoted to the proof of Ball's conjecture (Theorems \ref{thm-main} and \ref{thm-Ball-conj}). The second part (in which some of the arguments of the first part are repeated), consisting of Sections \ref{secWeak-stab}, \ref{secBall-ineq-stab}, \ref{secSantalo}, proves the stability results of the paper.

\section{Proof of Theorem \ref{prop-Ball}}
\label{secBallTheorem}

The proof of Theorem \ref{prop-Ball} will be a consequence of the classical Pr\'ekopa-Leindler inequality, where we only quote the equality case in the case we need. This argument will have important role also in proving the stability version Theorem~\ref{BS-ineq-stab} of  Keith Ball's conjectured inequality.

\begin{theorem}[Pr\'ekopa-Leindler-Dubuc \cite{Pr}, \cite{Le}, \cite{Du}]
\label{PLn}
If $f,g,h:\,\R^n\to [0,\infty)$ are integrable functions on $\R^n$ with positive integral
satisfying $h((x+y)/2))\geq \sqrt{f(x)g(y)}$ for
any $x,y\in\mathbb{R}^n$, then
\begin{equation}
\label{PLnineq}
\left(\int_{\R^n}  h\right)^2\geq \left(\int_{\R^n}f\right) \cdot \left(\int_{\R^n}g\right).
\end{equation}
If, in addition, $h$ is log-concave, and equality holds in \eqref{PLnineq}, then
there exists $w\in \R^n$ and $a>0$, such that $f(x)=ah(x-w)$ and $g(x)=a^{-1}h(x+w)$ for a.e. $x\in\R^n$. 
\end{theorem}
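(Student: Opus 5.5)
The plan is the classical proof of Pr\'ekopa--Leindler by induction on the dimension, using one-dimensional transport for the base case. For the equality statement I would follow Dubuc's strategy, tracking where every inequality in the induction becomes tight. By monotone convergence and truncation I may assume $f,g,h$ are bounded with compact support where needed.

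\textbf{Case $n=1$.} Put $F=\int f$ and $G=\int g$. For $t\in(0,1)$ define nondecreasing maps $u(t),v(t)$ by
$$\int_{-\infty}^{u(t)}f=tF,\qquad \int_{-\infty}^{v(t)}g=tG.$$
These are a.e. differentiable, and a.e. they satisfy $f(u(t))u'(t)=F$ and $g(v(t))v'(t)=G$. Set $w=(u+v)/2$, which is strictly increasing. Using the hypothesis $h(w(t))\ge\sqrt{f(u(t))g(v(t))}$ and then AM--GM, I get
$$\int_{\R}h\ \ge\ \int_0^1 h(w(t))\,w'(t)\,dt\ \ge\ \int_0^1\sqrt{f(u)g(v)}\,\frac{u'+v'}{2}\,dt\ \ge\ \int_0^1\sqrt{f(u)u'\,g(v)v'}\,dt=\sqrt{FG}.$$

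\textbf{Induction step.} Write $x=(x',s)$ with $x'\in\R^{n-1}$. For fixed $x',y'$, the fibre functions $s\mapsto f(x',s)$, $s\mapsto g(y',s)$ and $s\mapsto h\bigl((x'+y')/2,s\bigr)$ satisfy the one-dimensional hypothesis. The case $n=1$ then gives
$$H\bigl((x'+y')/2\bigr)\ge\sqrt{\tilde F(x')\tilde G(y')},$$
where $H,\tilde F,\tilde G$ are the fibre integrals of $h,f,g$. The $(n-1)$-dimensional case applied to $H,\tilde F,\tilde G$, together with Fubini, yields \eqref{PLnineq}.

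\textbf{Equality case.} Assume $h$ is log-concave and equality holds in \eqref{PLnineq}.
\begin{itemize}
\item For $n=1$, every inequality in the chain above must be an equality. Equality in AM--GM forces $u'=v'$ a.e., so $u-v\equiv 2w_0$ is constant. Equality in the first two steps forces $h(w(t))=\sqrt{f(u)g(v)}$ a.e. and $h=0$ a.e. outside the range of $w$. Combining these with $f(u)u'=F$ and $g(v)v'=G$ gives $f(x)=a\,h(x-w_0)$ and $g(x)=a^{-1}h(x+w_0)$ a.e., with $a=\sqrt{F/G}$.
\item Log-concavity of $h$ is what makes this rigorous. It ensures that the support of $h$ is an interval and that $h$ is a.e. continuous there, so that the range of $w$ and the a.e. identities are consistent.
\item In dimension $n$, equality forces equality in the $(n-1)$-dimensional step for $H,\tilde F,\tilde G$; note $H$ is log-concave by Pr\'ekopa's theorem, so induction applies. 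This gives a translation $w'\in\R^{n-1}$ and a constant $a$ with $\tilde F(x')=a\,H(x'-w')$ and $\tilde G(y')=a^{-1}H(y'+w')$.
\item Equality must also hold in the one-dimensional inequality for a.e. pair of fibres $x'=z-w'$, $y'=z+w'$. This yields fibrewise translations $\sigma(z)$ and constants $a(z)$ with $f(z-w',s)=a(z)h(z,s-\sigma(z))$.
\item Comparing with the marginal identity shows $a(z)\equiv a$.
\end{itemize}

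\textbf{Main obstacle.} The hardest step is proving that the fibre shift $\sigma(z)$ is constant. To do this I would re-use the freedom in the equality: pairs $(x',y')$ with the same midpoint $z$ but different offsets must also be extremal. This forces $\sigma(z)$ to be compatible across fibres. Log-concavity of $h$ on its (convex) support then upgrades $\sigma$ from affine to constant, by comparing barycenters of the fibres of $f$, $g$ and $h$, which depend affinely on $z$. Everything else in the argument is routine.
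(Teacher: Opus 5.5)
The paper does not prove this theorem; it quotes it from Pr\'ekopa, Leindler and Dubuc, so your argument has to stand on its own. The inequality part is the standard transport-plus-induction proof and is fine. The reduction of the equality case to ``the fibre shift $\sigma$ is constant'' is also sound, up to a sign: with your normalization $\tilde F(x')=aH(x'-w')$, the extremal fibre pairs are $x'=z+w'$, $y'=z-w'$.

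The genuine gap is the step you flag as the main obstacle, and both claims you use there are false.
\begin{enumerate}
\item Pairs of fibres with midpoint $z$ but a different offset are not extremal. After inserting the marginal identities, the fibre inequality for $x'=z+w'-d$, $y'=z-w'+d$ reads $H(z)\ge\sqrt{H(z-d)H(z+d)}$. This is strict for $d\neq0$ whenever $H$ is strictly log-concave, e.g.\ in the genuine equality case $f=g=h=e^{-\|x\|^2}$.
\item Fibre barycenters of a log-concave function are not affine in $z$. For the indicator of the half-disc $\{x_1^2+x_2^2\le1,\ x_2\ge0\}$, the fibre over $x_1=z$ has barycenter $\tfrac12\sqrt{1-z^2}$.
\end{enumerate}
Log-concavity of $h$ alone also cannot force an affine $\sigma$ to be constant. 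In fact everything you derive before this step is compatible with non-constant $\sigma$. Take $n=2$, $h=\mathbf 1_{[0,1]^2}$, $f=\mathbf 1_A$, $g=\mathbf 1_B$, $c>0$, with
$A=\{0\le x_1\le1,\ cx_1\le x_2\le1+cx_1\}$ and $B=\{0\le x_1\le1,\ -cx_1\le x_2\le1-cx_1\}$.
Then all three marginals equal $\mathbf 1_{[0,1]}$, every matched pair of fibres is extremal, and $\sigma(z)=cz$. The only thing that fails is the hypothesis for points on different fibres, e.g.\ $\tfrac12\big((0,0)+(1,-c)\big)\notin[0,1]^2$. So a correct proof must use $h(\frac{x+y}2)\ge\sqrt{f(x)g(y)}$ across non-matched fibres in an essential way, and your sketch does not.

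A clean repair is to project instead of slice. For $\theta\in S^{n-1}$ let $f_\theta(r)=\int_{\{\langle x,\theta\rangle=r\}}f\,d\mathcal H^{n-1}$, and define $g_\theta,h_\theta$ likewise. The $(n-1)$-dimensional inequality on parallel hyperplanes gives $h_\theta(\frac{r+s}2)\ge\sqrt{f_\theta(r)g_\theta(s)}$. Moreover $h_\theta$ is log-concave by Pr\'ekopa's theorem, and $\int h_\theta=\sqrt{\int f_\theta\int g_\theta}$. Your one-dimensional equality case therefore gives $f_\theta=a\,h_\theta(\cdot-c(\theta))$ and $g_\theta=a^{-1}h_\theta(\cdot+c(\theta))$, with $a=\sqrt{\int f/\int g}$ independent of $\theta$. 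Comparing barycenters gives $c(\theta)=\langle b,\theta\rangle$, where $b$ is the difference of the barycenters of $f/\int f$ and $h/\int h$. These barycenters are finite because $h$ has exponential tails and each $f_{e_i}$ is a translate of $a\,h_{e_i}$. This is where barycenters really are linear: in the projection direction, not in the fibre parameter. Hence $f$ and $a\,h(\cdot-b)$ have the same one-dimensional marginals in every direction, so they agree a.e.\ by Cram\'er--Wold (uniqueness of the Fourier transform); likewise $g=a^{-1}h(\cdot+b)$ a.e.

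One smaller point in your $n=1$ equality case: $u'=v'$ a.e.\ gives $u-v$ constant only once $u,v$ are known to be absolutely continuous. This does follow, because equality in $\int_{\R}h\ge\int_0^1h(w)w'\,dt$, together with $h>0$ on the interior of its support interval, excludes jumps and a singular part of $w=(u+v)/2$, and hence of its monotone summands $u$ and $v$.
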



For $X,Y\subseteq \R^n_+$, we consider the ``coordinatewise geometric mean"
$$
X^{\frac12}\cdot Y^{\frac12}=\{(\sqrt{x_1y_1},\dots,\sqrt{x_ny_n})\in \R^n_+:(x_1,\dots,x_n)\in X,\ (y_1,\dots,y_n)\in Y\}.
$$
We observe that if $\langle x,y\rangle\leq 1$ for any $x\in X$ and $y\in Y$, then
\begin{equation}
\label{XYinB2n}
X^{\frac12}\cdot Y^{\frac12}\subseteq B^n_2.
\end{equation}

\begin{proof}[Proof of Theorem \ref{prop-Ball}.] We may assume that $i=1$. Since $X\cap{\rm int}\,\R^n_+\neq\emptyset$ and $Y\cap{\rm int}\,\R^n_+\neq\emptyset$, the condition $\langle x,y\rangle\leq 1$ for any $x\in X$ and $y\in Y$ yields that both $X$ and $Y$ are bounded.
We consider the integrable functions $h,f,g:\,\R^n\to [0,\infty)$ defined as
\begin{align}
f(t_1,\dots,t_n)=&e^{2t_1}\mathbf{1}_X(e^{t_1},\dots,e^{t_n})\cdot e^{t_1+\dots+t_n},\label{f-function}\\
g(t_1,\dots,t_n)=&e^{2t_1}\mathbf{1}_Y(e^{t_1},\dots,e^{t_n})\cdot e^{t_1+\dots+t_n},\label{g-function}\\
h(t_1,\dots,t_n)=&e^{2t_1}\mathbf{1}_{B^n_2}(e^{t_1},\dots,e^{t_n})\cdot e^{t_1+\dots+t_n},\label{h-function}
\end{align}
and hence the substitution $(t_1,\dots,t_n)\mapsto(e^{t_1},\dots,e^{t_n})$ shows that these functions satisfy
\begin{equation}\label{integrals-substitut}
 \int_{\R^n}f(x)\, dx=\int_Xx_1^2\, dx,\qquad
\int_{\R^n}g(x)\, dx=\int_Yx_1^2\, dx,\qquad
\int_{\R^n}h(x)\, dx=\int_{B^n_2}x_1^2\,dx.   
\end{equation}

Moreover, \eqref{XYinB2n} and the condition $\langle x,y\rangle\leq 1$ for any $x\in X$ and $y\in Y$ ensure that
$$h\left((x+y)/2)\right)\geq \sqrt{f(x)g(y)},$$for any
$x,y\in\mathbb{R}^n$. Therefore,  the Pr\'ekopa-Leindler inequality \eqref{PLnineq} yields \eqref{eq-unc-ball}.

Assume now that equality holds in \eqref{eq-unc-ball}. Thus, equality holds in the Pr\'ekopa-Leindler inequality \eqref{PLnineq} for $f,g,h$. We deduce from Theorem~\ref{PLn} that there exist $a>0$ and $w=(w_1,\dots,w_n)\in\R^n$ such that  for $b_j=e^{w_j}$, $j=1,\dots,n$, we have
\begin{align*}
e^{2t_1}\mathbf{1}_X(e^{t_1},\dots,e^{t_n})\cdot e^{t_1+\dots+t_n}=&
a\cdot \frac{e^{2t_1}}{b_1^2}\cdot \mathbf{1}_{B^n_2}\left(\frac{e^{t_1}}{b_1},\dots,\frac{e^{t_n}}{b_n}\right)\cdot \frac{e^{t_1+\dots+t_n}}{b_1\dots b_n},\\
e^{2t_1}\mathbf{1}_Y(e^{t_1},\dots,e^{t_n})\cdot e^{t_1+\dots+t_n}=&
b_1^2\cdot \frac{e^{2t_1}}{a}\cdot \mathbf{1}_{B^n_2}\left(b_1e^{t_1},\dots,b_ne^{t_n}\right)\cdot e^{t_1+\dots+t_n}\cdot b_1\dots b_n.
\end{align*}
As the value of an indicator function is $0$ or $1$, and $X$ and $Y$ are compact, it follows that 
$$
X=\R^n_+\cap \Phi B^n_2 \qquad\textnormal{and}\qquad Y=\R^n_+\cap \Phi^{-1} B^n_2,
$$
for the diagonal transformation $\Phi$ with eigenvalues $b_1,\dots,b_n$.
This proves Theorem \ref{prop-Ball}.
\end{proof}

\section{Yao-Yao partitions}
\label{secYaoYao}

Let $E$ be an affine space of finite dimension. For the purposes of this paper, a {\it partition} of $E$ will be a finite family of non overlapping convex cones whose union is the whole $E$. A special family of partitions, called Yao-Yao partitions, was invented by Yao and Yao \cite{YY}. A Yao-Yao partition ${\cal P}$ (together with its {\it center}) is defined inductively as follows.

\begin{definition}\label{def-Yao-Yao}
Let $E$ be an affine subspace of a finite dimensional vector space $V$. If $\dim E=0$, i.e. $E=\{x\}$ for some element $x\in V$, the family ${\cal P}=\{\{x\}\}$ is defined to be a Yao-Yao partition of $E$ and $x$ is called the {\it center} of ${\cal P}$. When $\dim E=n\geq 1$, we say that ${\cal P}$ is a Yao-Yao partition  of $E$ with center $x\in E$, if there exists a hyperplane $F$ of $E$, a vector $v\in V$ which is not parallel to $F$ and two Yao-Yao partitions ${\cal P}_+$ and ${\cal P}_-$ of $F$ with the same center $x\in F$, such that 
$$
{\cal P}=\{A+\R_+v:A\in {\cal P}_+\}\cup \{A+\R_-v:A\in {\cal P}_-\}.
$$
In this case, we say that the Yao-Yao partition ${\cal P}$ {\it is based on the hyperplane $F$ of $E$}, and {\it$v$ is an axis of }${\cal P}$.
\end{definition}

Let us collect the basic properties of a Yao-Yao partition ${\cal P}$ of $\R^n$, that will be needed later (see \cite{Leh09a}). 
\begin{itemize}
    \item The center of ${\cal P}$ is well defined.
    \item ${\cal P}$ consists of $2^n$ elements of the form $x+\textnormal{pos}\,\{v_1,\dots,v_n\}$, where $x$ is the center of ${\cal P}$ and $v_1,\dots,v_n\in\R^n$ are linearly independent vectors.
    \item If the center of ${\cal P}$ is the origin, then the family ${\cal P}^*:=\{A^*:A\in {\cal P}\}$ is also a partition of $\R^n$ (see \cite{Leh09b}). Here, 
$$
A^*:=\{y\in\R^n:\langle x,y\rangle\geq 0, \ \forall x\in A\}
$$
is the dual cone of $A$.
    \item If $T:\R^n\to\R^n$ is a non-singular affine map, then the family $\{TA:A\in {\cal P}\}$ is also a Yao-Yao partition of $\R^n$.
\item If $H^+\subseteq E$ is a half-space containing the center of ${\cal P}$, then there exists a $A\in {\cal P}$ such that
\begin{equation}
\label{Yao-Yao-cone-in-halfspace}
A\subseteq H^+.
\end{equation}
\end{itemize}
Let $\mu$ be a finite Borel measure on $\R^n$. We say that a Yao-Yao partition {\cal P} is a {\it Yao-Yao equipartition} of $\mu$, if for all $A\in {\cal P}$, $\mu(A)=2^{-n}\mu(\R^n)$. Under some mild assumptions, the existence of a Yao-Yao equipartition for $\mu$ was established by Yao and Yao \cite{YY} and later by Lehec \cite{Leh09a} (under weaker assumptions on $\mu$).
Lehec's construction appears to provide more information than the original construction by Yao and Yao. 
For example, let $\lambda_1,\dots,\lambda_n:\R^n\to\R$ be affine forms, such that the map 
$$
E\in z\mapsto (\lambda_1(z),\dots,\lambda_n(z))\in\R^n
$$ 
is one to one. Following \cite[Definition 6]{Leh09a}, we say (inductively) that a Yao-Yao partition ${\cal P}$ based on a hyperplane $F\subseteq\R^n$ is {\it adapted to the coordinate system} $(\lambda_1,\dots,\lambda_n)$ if $F=\{y\in E:\lambda_1(y)=\lambda_1(x)\}$ and ${\cal P}_+$, ${\cal P}_-$ are both adapted to $(\lambda_2|_F,\dots,\lambda_n|_F)$.
According to Lehec's \cite[Proposition 14]{Leh09a}, if $\mu$ is a finite Borel measure on $\R^n$ such that $\mu(H)=0$ for all hyperplanes $H\subseteq\R^n$ and $\textnormal{supp}\,\mu=\R^n$, then there exists a Yao-Yao equipartition of $\mu$ that is adapted to $(\lambda_1,\dots,\lambda_n)$, and two such Yao-Yao partitions have the same center. In particular, the following crucial ingredient for the proof of Theorem \ref{thm-main} can be deduced from the results in $\cite{Leh09a}$.

{\color{black}
\begin{oldtheorem}[Lehec]
\label{thm old A} 
Let $(\lambda_1,\dots,\lambda_n)$ be a system of coordinates and $\mu$ an even finite Borel measure on $\R^n$ such that $\mu(\R^n)>0$ and $\mu(H)=0$ holds for all hyperplanes $H\subseteq\R^n$.
Then there exists a Yao-Yao equipartition $\mathcal{P}$ of $\mu$ adapted to $(\lambda_1,\dots,\lambda_n)$ having its center at the origin. In particular, for any $u\in S^{n-1}$, there exists a Yao-Yao equipartition ${\cal P}$ of $\mu$, which is based on $u^\perp$ and has its center at the origin.   
\end{oldtheorem}
\begin{proof} 
For an even and integrable function $f:\R^n\to(0,\infty)$ and for a positive integer $k$, let $\mu_k:=\mu+(1/k)f\,dx$, and hence $\mu_k$ is an even finite Borel measure that assigns zero mass to any hyperplane and  $\textnormal{supp}\,\mu_k=\R^n$. 
According to \cite[Proposition 14]{Leh09a}, there exists a unique Yao-Yao equipartition ${\cal P}_k$ of $\mu_k$ which is adapted $(\lambda_1,\dots,\lambda_n)$.  Since $\mu_k$ is also even, uniqueness of the center ensures that for each $k$, the center of ${\cal P}_k$ is the origin (cf. \cite[Lemma 15]{Leh09a}). But then,  approximation (\cite[Lemma 12]{Leh09a}) shows that $\mu$ also has a Yao-Yao equipartition ${\cal P}$ with center $o$, adapted to $(\lambda_1,\dots,\lambda_n)$. 
In particular, taking $\lambda_1(x)=\langle u,x\rangle$,
${\cal P}$ is based on the hyperplane
$$
\{y\in \R^n:\,\lambda_1(y)=\lambda_1(o)\}=u^\perp,
$$
which is exactly what we wanted to show.
\end{proof}}

\section{Proof of Theorem~\ref{thm-Ball-conj} and Theorem \ref{thm-main}}
\label{secBall-conj}

\begin{proof}[{\color{black}Proof of Theorem~\ref{thm-main}}]
To prove \eqref{eq-thm-main}, 
we may clearly assume that $u=e_1$. Define the measure $\mu:={\bf 1}_Kx_1^2dx$. By Theorem \ref{thm old A},  there are two Yao-Yao partitions ${\cal P}_+$, ${\cal P}_-$ of $e_1^\perp$ and a vector $v\in  S^{n-1}$ with $\langle v,e_1\rangle>0$, such that the partition
$${\cal P}:=\{A'+ \R_{+} v:A'\in{\cal P}_+\}\cup\{A'+\R_{-} v:A'\in {\cal P}_-\}$$is a Yao-Yao equipartition of $\mu$ with center $o$. Let $T:\R^n\to\R^n$ be the linear map, such that $T(v)=\langle e_1,v\rangle e_1$ and  $Tz=z$ for $z\in e_1^\perp$, and hence $\det T=1$. Then, for an arbitrary $x\in \R^n$ we have
\begin{equation}\label{eq-T^{-1}}
\langle T^{-1}x,e_1\rangle=x_1\langle T^{-1}e_1,e_1\rangle+\langle T^{-1}(x-x_1e_1),e_1\rangle=x_1\langle v/\langle e_1,v\rangle, e_1\rangle=x_1.
\end{equation}
Set $\widetilde{K}:=TK$. Using \eqref{eq-T^{-1}} 
for any Borel set $\Omega$ in $\R^n$,
we get
$$
    \int_{(T\Omega)\cap \widetilde{K}}x_1^2\, dx= \int_{(T\Omega)\cap \widetilde{K}}\langle T^{-1}x,e_1\rangle^2\, dx=\int_{T^{-1}((T\Omega)\cap \widetilde{K})}\langle x,e_1\rangle^2\,dx=\int_{\Omega\cap K} x_1^2\,dx.
$$
In particular, we conclude that
\begin{equation}\label{eq-TA cap L}
     \int_{ \widetilde{K}}x_1^2\,dx=   \int_{K}x_1^2\,dx
\end{equation}
and if $\widetilde{\cal P}:=\{TA:A\in{\cal P}\}$, then $\widetilde{\cal P}$ is a Yao-Yao equipartition for $\tilde{\mu}:={\bf 1}_{\widetilde{K}}x_1^2dx$ based on $e_1^\perp$ as
\begin{equation}\label{eq-B cap L}
    \int_{B\cap \widetilde{K}}x_1^2\,dx=\frac{1}{2^n}\int_{\widetilde{K}}x_1^2\,dx,\qquad \forall B\in\widetilde{\cal P}.
\end{equation}
On the other hand, we have
\begin{equation}\label{v-vs-e1-in-polar-body}
\int_{{\widetilde{K}}^\circ}x_1^2\, dx=\int_{T^{-t}K^\circ}x_1^2\,dx
=\int_{K^\circ}\langle T^{-t}x,e_1\rangle^2\,dx
=\int_{K^\circ}\langle x,T^{-1}e_1\rangle^2\,dx
=\frac{1}{\langle e_1,v\rangle^2}\int_{K^\circ}\langle x,v\rangle^2\,dx.    
\end{equation}

Hence, $|\langle e_1,v\rangle|\leq 1$ and the isotropicity of $K^\circ$ (cf. \eqref{quasi-iso-u}) imply
\begin{equation}\label{eq-geq}
\int_{{\widetilde{K}}^\circ}x_1^2\, dx\geq \int_{K^\circ}\langle x,v\rangle^2\,dx=\int_{K^\circ}x_1^2\,dx.
\end{equation}
Now, \eqref{eq-TA cap L} 
and \eqref{eq-geq} yield
\begin{equation}\label{eq-8.5}\int_Kx_1^2\,dx\int_{K^\circ}x_1^2\,dx\leq \int_{\widetilde{K}}x_1^2\,dx\int_{{\widetilde{K}}^\circ}x_1^2\,dx.\end{equation}
Clearly, equality holds in \eqref{eq-8.5} if and only if $v=e_1$. It remains to prove that \eqref{eq-thm-main} holds with $u=e_1$ and with ${\widetilde{K}}$ in the place of $K$. To this end, observe that if $B\in \widetilde{\cal P}$, 
then there exist $C\subseteq e_1^\perp$ and $\varepsilon\in\{-1,1\}$, such that $B=C+\varepsilon\R_+ e_1$, where $C$ is of the form $\textnormal{pos}\,\{v_1,\dots,v_{n-1}\}$, for some linearly independent vectors $v_1,\dots,v_{n-1}\in e_1^\perp$. We wish to show that 
\begin{equation}\label{eq-wish}
\int_{B\cap {\widetilde{K}}}x_1^2\,dx\int_{B^*\cap {\widetilde{K}}^\circ}x_1^2\, dx\leq \frac{1}{2^{2n}}\left(\int_{B_2^n}x_1^2\,dx\right)^2,  
\end{equation}with equality if and only if for {\color{black}for this particular} $B\in \widetilde{\cal P}$, there exists a $o$-symmetric ellipsoid $E_B$, which is symmetric with respect to $e_1^\perp$ and satisfies $B\cap E_B={\color{black}B\cap {\widetilde{K}}}$.

By symmetry, we may assume that  
$\varepsilon=1$. Let $S:\R^n\to\R^n$ be any linear map (which depends on $B$) with $S(e_1)=e_1$, $S(C)=\R^n_+\cap e_1^\perp$ and $|\det S|=1$. Then, clearly,
$$S(B)=\R^n_+=(S(B))^*=S^{-t}(B^*).$$
Notice also that, for $x\in \R^n$, it holds
$\langle Sx,e_1\rangle=x_1$. 
Consequently, \eqref{eq-B cap L} gives
\begin{eqnarray}
\int_{\R^n_+\cap (S{\widetilde{K}})} x_1^2\,dx&=&\int_{S(B\cap {\widetilde{K}}) } x_1^2\,dx=\int_{B\cap {\widetilde{K}}}\langle Sx,e_1\rangle^2\,dx
=\int_{B\cap {\widetilde{K}}}x_1^2\,dx\label{eq-7}\\
&=&\frac{1}{2^n}\int_{\widetilde{K}}x_1^2\,dx.\label{eq-8}
\end{eqnarray}
In addition,
\begin{eqnarray}
\int_{\R^n_+\cap (S{\widetilde{K}})^{\circ}}x_1^2\,dx&=&\int_{S^{-t}(B^*\cap {\widetilde{K}}^\circ)}x_1^2\,dx=\int_{B^*\cap {\widetilde{K}}^\circ}\langle S^{-t}x,e_1\rangle^2=\int_{B^*\cap {\widetilde{K}}^\circ}\langle x,S^{-1}e_1\rangle^2\,dx\nonumber\\
&=&\int_{B^*\cap {\widetilde{K}}^\circ}x_1^2\,dx.
\label{eq-9}    
\end{eqnarray}
Now, \eqref{eq-wish} follows immediately from \eqref{eq-7}, \eqref{eq-9} and Theorem \ref{prop-Ball}. In fact, if equality holds in \eqref{eq-wish}, then equality necessarily holds in \eqref{eq-unc-ball} with $S{\widetilde{K}}$ in place of $K$. In other words, there exists an unconditional ellipsoid $E$, such that $\R^n_+\cap (S{\widetilde{K}})=\R^n_+\cap E$. Observe that the ellipsoid $E_B:=S^{-1}(E)$ is symmetric with respect to $e_1^\perp$ and that $B\cap E_B=B\cap {\widetilde{K}}$. This establishes the characterization of equality cases in \eqref{eq-wish}, claimed earlier.

Recall that the family $\big(\widetilde{\cal P}\big)^*=\{B^*:B\in\widetilde{\cal P}\}$ is also a partition of $\R^n$. Therefore, \eqref{eq-wish} and \eqref{eq-8} give
\begin{eqnarray}
\nonumber
 \int_{\widetilde{K}}x_1^2\,dx\int_{{\widetilde{K}}^\circ}x_1^2\,dx&=&\sum_{B\in\widetilde{\cal P}}\int_{{\widetilde{K}}\cap B}x_1^2\,dx\sum_{D\in(\widetilde{\cal P})^*}\int_{{\widetilde{K}}^\circ\cap D}x_1^2\,dx\\
 &=&\label{measure-decomp-into-2^n-parts}2^n\sum_{D\in(\widetilde{\cal P})^*}\int_{{\widetilde{K}}\cap D^*}x_1^2\,dx\int_{{\widetilde{K}}^\circ\cap D}x_1^2\,dx\\
 &\leq&\nonumber 2^n\sum_{D\in(\widetilde{\cal P})^*}\frac{1}{2^{2n}}\left(\int_{B_2^n}x_1^2\,dx\right)^2=\left(\int_{B_2^n}x_1^2\,dx\right)^2.
\end{eqnarray}
This completes the proof of \eqref{eq-thm-main}.\\

Finally, we assume that equality holds in \eqref{eq-thm-main}, and hence equality holds in \eqref{eq-8.5} and \eqref{eq-wish} for any $B\in \widetilde{\cal P}$, as well. We deduce from the equality conditions in \eqref{eq-8.5} that $v=e_1$, and hence ${\widetilde{K}}=K$ and $\widetilde{\cal P}=\mathcal{P}$. It follows from the equality conditions in \eqref{eq-wish} that for any $A\in \mathcal{P}$, there exists an $o$-symmetric ellipsoid $E_A\subseteq\R^n$ such that $A\cap K=A\cap E_A$. 
{\color{black}If $n=2$, $v=u=e_1$ implies $\mathcal{P}=\{\textnormal{pos}\{\varepsilon_1e_1,\varepsilon_2e_2\}:\varepsilon_i=\pm1\}$. By Theorem \ref{prop-Ball}, ${\widetilde{K}}$ is an unconditional ellipse, hence $K$ is an ellipse itself. The fact that $K$ is a Euclidean disk follows from the isotropicity of $K^\circ$.}

{\color{black}To handle the case $n\geq3$, we say that two cones $A,B\in \mathcal{P}$ are neighbors, if $A={\rm pos}\{A',e_1\}$ and $B={\rm pos}\{B',e_1\}$, where $(A'\cap B')\backslash {\rm pos(e_1)}\neq\emptyset$.
We claim that if $A,B\in \mathcal{P}$ are neighbors, then
\begin{equation}
\label{neighboring-ellipsoids}
E_A=E_B .
\end{equation}
Take another Yao-Yao equipartition $\mathcal{Q}$ based on $e_1^\perp$, with axis $e_1$, that contains some $\Omega \in\mathcal Q$ such that $\textnormal{int}(A\cap \Omega )\neq\emptyset$ and $\textnormal{int}(B\cap \Omega )\neq\emptyset$ (the existence of such an equipartition is due to the fact that we may arbitrarily choose a hyperplane $H$ of $e_1^\perp$ such that $\mathcal{P_+}$ is based on $H$; recall Theorem \ref{thm old A}) . Then there exists an ellipsoid $E_\Omega $ such that $\Omega \cap K=\Omega \cap E_\Omega $. 
But then $$\partial(A\cap E_A)\cap\textnormal{int}(A\cap \Omega )=\partial(\Omega \cap E_\Omega )\cap\textnormal{int}(A\cap \Omega ).$$
Since the boundary of an ellipsoid is a real analytic surface, we obtain $E_A=E_\Omega $. Similarly, $E_B=E_\Omega $ proving the claim \eqref{neighboring-ellipsoids}.}

Finally, it follows via induction on the dimension that  the graph $G$ whose vertices are the {\color{black}$2^{n-1}$ elements of $\{A\in\mathcal{P}:x_1\geq0~~\forall x\in A\}$ and the edges formed by the neighboring elements  is connected. Therefore,} for any $A,A'\in\mathcal{P}$, it holds $E_{A}=E_{A'}$ and hence $K$ is an origin symmetric ellipsoid. Since $K^\circ$ is isotropic, the ellipsoid $K$ is a Euclidean ball.
\end{proof}
\noindent{\bf Remark.}
    We would like to mention here that Yao-Yao partitions are not needed in dimensions 2 and 3 because one may simply equipartition $\mu$ into 4 (resp. 8) cones, using 2 lines (resp. 3 planes) passing through $o$, one of them being $u^\perp$. Indeed, if $n=2$ the existence of such an equipartition is trivial by the intermediate value theorem; whereas, if $n=3$, also by continuity, there exist two planes (one of them being $u^\perp$ and the other containing $u$) dividing $\mu$ into 4 equal parts and the existence of a third plane bisecting the measure of all 4 parts comes from appplying Ham-Sandwich Theorem on 2 adjacent parts and $B_2^n$ (recall symmetry).

\begin{proof}[Proof of Theorem \ref{thm-Ball-conj}]
Let $K$ be a symmetric convex body in $\Rn$ and let $K'$ be an isotropic linear image of $K$. 
Since the functional $$K\mapsto \int_K\int_{K^\circ}\langle x,y\rangle^2\, dy\, dx$$is invariant under non-singular linear maps, it suffices to prove \eqref{eq-conj} for $K'$ instead of $K$. 
By the isotropicity of $K'$, we get
$$\int_{K'}x_ix_j\, dx=0,\qquad i\neq j$$ and therefore,
\begin{equation}\label{eq-thm-conj-n=2-proof}\int_{K'}\int_{(K')^\circ}\langle x,y\rangle^2\, dy\, dx=\sum_{i=1}^n\int_{K'}x_i^2\, dx\int_{(K')^\circ}y_i^2\, dy.\end{equation}
The proof of \eqref{eq-conj} for $K'$ follows immediately from \eqref{eq-thm-conj-n=2-proof}, Theorem \ref{thm-main} and then again \eqref{eq-thm-conj-n=2-proof} with $K'=(K')^\circ=B_2^n$. Moreover, if equality holds in \eqref{eq-conj}, then equality must hold in \eqref{eq-thm-main} for $K'$ in the place of $K$ and for $u=e_1$. It follows by the equality cases in Theorem \ref{thm-main} that $K'$ is a Euclidean ball. That is, $K$ is an ellipsoid.
\end{proof}

\section{Weak stability of Theorem~\ref{thm-main}}
\label{secWeak-stab}

We note that if $L\subseteq\R^n$ is a symmetric convex body in isotropic position, then Kannan, Lov\'asz, Simonovits \cite{KLS95} prove that there exists an $o$-symmetric ball $B\subseteq L$ such that $L\subseteq nB$. If, in addition, $|L|=\omega_n$, then we deduce that
\begin{equation}
\label{ball-in-out-isotropic}
\frac1n\,B^n_2\subseteq L\subseteq nB^n_2\mbox{ \ and \ }\frac1n\,B^n_2\subseteq L^\circ\subseteq nB^n_2.
\end{equation}

Figalli, van Hintum, Tiba \cite{FHT25+}
 prove the following stability version of the Pr\'ekopa-Leindler inequality \eqref{PLnineq}.

\begin{theorem}[Figalli, van Hintum, Tiba]
\label{PLhstab}
For some explicit constant $\theta_n>1$ depending on $n\geq 1$, if  
$h,f,g:\,\R^n\to [0,\infty)$ are integrable with pos{\color{black}i}tive integral and $h$ is log-concave  such that
   $h(\frac12\,x+\frac12\,y)\geq \sqrt{f(x)g(y)}$ for
$x,y\in\mathbb{R}^n$, and
$$
\left(\int_{\R^n}  h\right)^2\leq (1+\varepsilon) \left(\int_{\R^n}f\right)\left( \int_{\R^n}g\right)
$$
for $\varepsilon\in(0,1]$, then
there exists $w\in\R^n$ such that for $a=\sqrt{\int_{\R^n}g/\int_{\R^n}f}$, we have
\begin{eqnarray*}
\int_{\R^n}|h(x)-a f(x-w)|\,dx&\leq &\theta_n\sqrt{\varepsilon}  
\cdot \int_{\R^n}h \\
\int_{\R^n}|h(x)-a^{-1}g(x+w)|\,dx&\leq &\theta_n\sqrt{\varepsilon}   
\cdot\int_{\R^n}h.
\end{eqnarray*}
\end{theorem}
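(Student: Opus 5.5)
We follow the level-set strategy: deduce the functional statement from a quantitative Brunn--Minkowski inequality, integrating over levels. All of this is a plan; the key input in the second step is assumed, not proved.

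\emph{Normalization.} Replacing $f$ by $af$ and $g$ by $a^{-1}g$ with $a=\sqrt{\int g/\int f}$ keeps the product $f(x)g(y)$, so the hypothesis $h(\frac{x+y}{2})\geq\sqrt{f(x)g(y)}$ still holds. After this, $\int f=\int g$. By the scalings $f,g,h\mapsto cf,cg,ch$ and $x\mapsto\lambda x$ we may also assume $\int f=\int g=1$. Then $\int h\le\sqrt{1+\varepsilon}\le 1+\varepsilon$, while $\int h\geq 1$ by Theorem~\ref{PLn}.

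\emph{Level sets.} For $r,s>0$ set $A_r=\{f>r\}$, $B_s=\{g>s\}$ and $C_t=\{h>t\}$. Each $C_t$ is convex because $h$ is log-concave, and $\frac12A_r+\frac12B_s\subseteq C_{\sqrt{rs}}$. We compare $\int h=\int_0^\infty|C_t|\,dt$ with $\int f=\int_0^\infty|A_t|\,dt$ and $\int g=\int_0^\infty|B_t|\,dt$. Using $|C_{\sqrt{rs}}|\ge|\frac{A_r+B_s}{2}|\ge\sqrt{|A_r||B_s|}$ along a suitable reparametrization of levels (the one-dimensional Prékopa--Leindler on the distribution functions), the total deficit $\varepsilon$ controls two quantities. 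The first is an averaged Brunn--Minkowski deficit $\delta(r,s)$ for the pairs $(A_r,B_s)$ at matched levels. The second is a one-dimensional deficit for the distribution functions $t\mapsto|A_t|$, $|B_t|$ and $|C_t|$.

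For the second quantity we apply a sharp one-dimensional stability estimate, proved by transport (monotone rearrangement). The AM--GM gap along the quantile map is quadratic in the discrepancy, so Cauchy--Schwarz gives an $L^1$ bound of order $\sqrt\varepsilon$. For the level-set pairs we use the fact that $\frac12A+\frac12B$ sits inside the convex set $C$. We want a quantitative Brunn--Minkowski inequality with square-root dependence: if $|C|\le(1+\delta)\sqrt{|A||B|}$ with $C$ convex and $\frac{A+B}2\subseteq C$, then after a translation $|A\Delta (C+w)|+|B\Delta(C-w)|\lesssim_n\sqrt\delta\,|C|$. For convex $A,B$ this is Figalli--Maggi--Pratelli \cite{FMP10}. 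For general $A,B$ we would first show that $A,B$ have small measure outside translates of $C$, using $A+b\subseteq 2C$ for every $b\in B$ and Fubini. We would then apply \cite{FMP10} to their convex hulls.

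\emph{Main obstacle: a common translation.} Each level pair comes with its own translation $w(r,s)$, and we need one $w$ working for all levels simultaneously. First try: use that all the $C_t$ are nested convex sets. A level $t_0$ where $|C_{t_0}|$ carries a definite fraction of $\int h$ (which exists by log-concavity) fixes $w$. Every other level's translation can then be compared to it through nestedness: since $A_r\subseteq A_{r'}$ for $r>r'$, the centroids of $A_r\cap(C_t+w)$ move only by $O(\sqrt\delta)$ relative to the body's size. Finally we integrate the level-set estimates in $t$ by the layer-cake formula. This gives $\int|h(x)-af(x-w)|\,dx\le\theta_n\sqrt\varepsilon\int h$, and the estimate for $g$ follows symmetrically with $-w$. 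Controlling levels where $|C_t|$ is tiny is the delicate part. There we would simply bound the contribution by $\int_{\{h\text{ small}\}}h$, which log-concavity makes $O(\sqrt\varepsilon)$ after truncation.
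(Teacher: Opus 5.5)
The paper does not prove this statement. It is imported from Figalli--van Hintum--Tiba \cite{FHT25+} and used as a black box in Proposition~\ref{coordinatewisestab0}. So your sketch has to stand on its own, and it fails at exactly the step you call the main obstacle. Everything you extract from the hypothesis before that step concerns \emph{matched} levels only. That is: the inclusions $\frac12A_r+\frac12B_{s(r)}\subseteq C_{\sqrt{rs(r)}}$ along the monotone matching $r\mapsto s(r)$ from the one-dimensional transport, the distribution functions $t\mapsto|A_t|,|B_t|,|C_t|$, and the nestedness of the level sets (with per-level deficits controlled only on average). No common translation can be extracted from these data alone. In particular, the claim that the centroids drift by only $O(\sqrt\delta)$ relative to the size of the bodies does not follow from nestedness.

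Here is a configuration where all of these quantities vanish but the conclusion fails. Let $h(x)=e^{-\|x\|^2/2}$, $K=B^n_2-\frac12e_1$, $f(x)=e^{-\|x\|_K^2/2}$ and $g(x)=e^{-\|x\|_{-K}^2/2}$, where $\|\cdot\|_K$ is the gauge of $K$. With $\rho(r)=\sqrt{2\log(1/r)}$ one has, up to boundaries, $A_r=\rho(r)K$, $B_r=-\rho(r)K$ and $C_r=\rho(r)B^n_2$.
\begin{itemize}
\item The three distribution functions coincide, so $s(r)=r$ and the one-dimensional deficit is $0$.
\item $\frac12A_r+\frac12B_r=\frac{\rho(r)}2(K-K)=C_r$, so every matched Brunn--Minkowski deficit is $0$.
\item The level sets are nested.
\end{itemize}
Yet the centroid $-\frac{\rho(r)}2e_1$ of $A_r$ moves proportionally to the size of $A_r$, and $\inf_{w}\int_{\R^n}|h(x)-f(x-w)|\,dx>0$. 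This triple must violate $h(\frac{x+y}2)\geq\sqrt{f(x)g(y)}$. Otherwise $\int h=\int f=\int g$, and the equality case of Theorem~\ref{PLn} would make $f$ a translate of $h$. But it violates it only for $x\in A_r$, $y\in B_s$ with $r\neq s$, which are pairs your argument never looks at.

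Hence any proof along your lines must turn the cross-level inclusions $\frac12A_r+\frac12B_s\subseteq C_{\sqrt{rs}}$, $r\neq s$, into quantitative control of how the translations drift from level to level, at the sharp rate $\sqrt\varepsilon$. How much room $C_{\sqrt{rs}}$ leaves around $\frac12(C_r+C_s)$ depends on $h$. There is none for $h=e^{-\|x\|}$, and for the Gaussian it is only quadratically small in $\rho(r)-\rho(s)$. Extracting sharp control from this is where the real difficulty of the theorem lies, and your sketch does not supply it.

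Separately, your truncation remark mixes up the two ends. The levels where $|C_t|$ is tiny are those near $\max h$, not the region where $h$ is small.
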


For $t>0$ and $u\in S^{n-1}$, let us consider the strip
$$
\Theta_{u,t}=\{(x_1,\ldots,x_n)\in\R^n:|\langle x,u\rangle|\leq t\}.
$$
The key statement is the following stability version of Theorem~\ref{prop-Ball}.

\begin{proposition}
\label{coordinatewisestab0}
For measurable $X,Y\subseteq \R^n_+$ with $|X|>0$ and $|Y|>0$, and $i=1,\ldots,n$, if $\langle x,y\rangle\leq 1$ for any $x\in X$ and $y\in Y$, and
\begin{equation}
\label{Ballineq-equa-eps}
\int_Xx_i^2\;dx \cdot \int_Y x_i^2\;dx\geq (1-\varepsilon)\left(\int_{\R^n_+\cap B^n_2}x_i^2\;dx\right)^2
\end{equation}
for some $\varepsilon\in(0,\frac12)$, then
there exists  positive definite diagonal matrix $\Phi$ such that 
\begin{align}
\label{coordinatewisestab-strip-eq}
\left|\big((\R^n_+\cap B^n_2)\Delta (\Phi X)\big)\left\backslash \Theta_{e_i,\Lambda}\right.\right|<&\frac{\theta_n}{\Lambda^2}\cdot \sqrt{\varepsilon} 
\mbox{ \ and }
&\left|\big((\R^n_+\cap B^n_2)\Delta (\Phi^{-1} Y)\big)\left\backslash \Theta_{e_i,\Lambda}\right.\right|<&\frac{\theta_n}{\Lambda^2}\cdot\sqrt{\varepsilon}
\end{align}
for any $\Lambda>0$ where the explicit $\theta_n>1$ depends on $n$. Furthermore, if $X,Y\subseteq RB_2^n$ for some $R>0$, then,
for $\varepsilon$ small enough (depending only on $n$), if $t_i$, $i=1,\dots,n$ are the eigenvalues of $\Phi^{-1}$ then it holds $t_i\leq2R$ and $t_i^{-1}\leq2R$, for all $i$. 
\end{proposition}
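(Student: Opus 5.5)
We follow the proof of Theorem~\ref{prop-Ball}, replacing the equality case of Pr\'ekopa--Leindler by its stability version, Theorem~\ref{PLhstab}. Take $i=1$ and define $f,g,h$ by \eqref{f-function}, \eqref{g-function}, \eqref{h-function}. Then $h$ is log-concave, since $\{t:\,(e^{t_1},\dots,e^{t_n})\in B^n_2\}$ is convex (the map $t\mapsto\sum e^{2t_j}$ is convex) and the exponential weight is log-linear. As before, \eqref{XYinB2n} gives $h(\frac{x+y}2)\geq\sqrt{f(x)g(y)}$, and by \eqref{integrals-substitut} the assumption \eqref{Ballineq-equa-eps} reads $(\int h)^2\leq (1-\varepsilon)^{-1}\int f\int g\leq (1+2\varepsilon)\int f\int g$. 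Theorem~\ref{PLhstab} then yields $w\in\R^n$ and $a>0$ with $\int|h(t)-af(t-w)|\,dt\leq\theta_n\sqrt{\varepsilon}\int h$, and the analogous bound for $g$.

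Next we pull these bounds back to $\R^n_+$. Put $b_j=e^{w_j}$. Then $af(t-w)$ equals $a\,b_1^{-2}(b_1\cdots b_n)^{-1}e^{2t_1}e^{\sum t_j}\mathbf 1_{\Phi X}(e^{t})$, where $\Phi=\mathrm{diag}(b_j)$. After the substitution $x=e^t$, which has Jacobian $e^{\sum t_j}$, the $L^1$ bound becomes
$$\int_{\R^n_+}x_1^2\,\big|\mathbf 1_{B^n_2}(x)-c\,\mathbf 1_{\Phi X}(x)\big|\,dx\leq\theta_n\sqrt\varepsilon\int_{B^n_2}x_1^2,$$
with $c=a\,b_1^{-2}(b_1\cdots b_n)^{-1}$. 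The constant $c$ is handled by a standard trick. Since $x_1^2$ has positive mass on $B^n_2\cap\R^n_+$, the above inequality forces $|1-c|\lesssim\sqrt\varepsilon$, provided we either compare the mass on $B^n_2$ or note that $\int_{\Phi X}x_1^2$ has the right size. We then replace $c$ by $1$ at the cost of another $O(\sqrt\varepsilon)$, giving $\int_{(\R^n_+\cap B^n_2)\Delta\Phi X}x_1^2\leq\theta_n'\sqrt\varepsilon$. On the complement of the strip $\Theta_{e_1,\Lambda}$ we have $x_1^2\geq\Lambda^2$, so Chebyshev's bound gives \eqref{coordinatewisestab-strip-eq}. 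The matching $g$-computation gives $\Phi^{-1}Y$: $g(t+w)$ involves $\mathbf 1_Y(b e^t)$, i.e. $\Phi^{-1}Y$. The two $a$'s are coupled through $a=\sqrt{\int g/\int f}$, and each constant is again absorbed by the mass argument. One should be careful that the same $\Phi$ works for both $X$ and $Y$. It does, because Theorem~\ref{PLhstab} provides a common $w$.

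For the final claim, suppose $X,Y\subseteq RB^n_2$. The set $\Phi X$ is contained in the box $\prod[0,b_jR]$, and it covers most of $\R^n_+\cap B^n_2$ outside the thin strip around $e_1^\perp$. Choose $\Lambda$ small, say $\Lambda=1/4$. The region $\{x\in\R^n_+\cap B^n_2:\,x_1>\Lambda,\ x_j>1/2\}$ has volume bounded below by a constant $c_n$ for each $j$; for $j=1$, use $\{x_1>1/2\}$. Once $\theta_n\Lambda^{-2}\sqrt\varepsilon<c_n$, this region must meet $\Phi X$, which forces $b_jR\geq 1/2$, i.e. $t_j^{-1}=b_j^{-1}\leq 2R$. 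Applying the same argument to $\Phi^{-1}Y\subseteq\prod[0,b_j^{-1}R]$ gives $t_j=b_j\leq2R$. (Here we identify the eigenvalues $t_j$ of $\Phi^{-1}$ with $b_j^{-1}$; the two bounds are symmetric, so the labelling does not matter.)

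The main obstacle is the bookkeeping in the second paragraph: converting the weighted $L^1$ closeness of $h$ to $af(\cdot-w)$, with an unknown multiplicative constant, into symmetric-difference bounds for indicators. The key point is that indicator functions take only the values $0$ and $1$, so $|1-c\mathbf 1|\geq\min(1,|1-c|)$ pointwise. This lets us bound $|1-c|$ first and then the symmetric difference.
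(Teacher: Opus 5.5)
Your proposal is correct and is essentially the paper's proof: Theorem~\ref{PLhstab} for the same $f,g,h$, pull-back under $x=e^t$ with $\Phi=\mathrm{diag}(e^{w_j})$, Chebyshev off $\Theta_{e_1,\Lambda}$, and a box argument for the eigenvalues. In two places you are more careful than the paper: it passes from the $L^1$ bound to the symmetric difference without addressing the factor $c=a\,b_1^{-2}(b_1\cdots b_n)^{-1}$, which your observation $|1-c\mathbf 1|\geq\min(1,|1-c|)$ controls, and its eigenvalue argument tests against $\{x\in\R^n_+\cap B^n_2:\,x_k>\frac12\ \forall k\}$, which is empty for $n\geq 4$, whereas your region $\{x_1>\frac14,\ x_j>\frac12\}$ works in every dimension.

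The one loose end, shared with the paper, is the range where $\theta_n\sqrt{\varepsilon}$ is not small, where the min-trick gives nothing. There, take $\Phi=\lambda I$ with $\int_{\lambda X}x_1^2\,dx=\int_{\lambda^{-1}Y}x_1^2\,dx$; both integrals are then at most $\int_{\R^n_+\cap B^n_2}x_1^2\,dx$ by Theorem~\ref{prop-Ball}, so it suffices to enlarge $\theta_n$.
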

\begin{proof} 
We may assume that $i=1$, and after rescaling $X$ and $Y$, we may assume that
$$
\int_{X}{\color{black}x_1^2}\;dx=\int_{Y}x_1^2\;dx,
$$
and hence, using also Theorem~\ref{thm-main}, we have
$$
(1-\varepsilon)\int_{\R^n_+\cap B^n_2}x_1^2\;dx<\int_{X}x_1^2\;dx\leq \int_{\R^n_+\cap B^n_2}x_1^2\;dx.
$$
We consider again the functions {\color{black}$f,g,h$ defined in \eqref{f-function}, \eqref{g-function} and \eqref{h-function}, which satisfy \eqref{integrals-substitut} as well as
$h(\frac12\,x+\frac12\,y)\geq \sqrt{f(x)g(y)}$} for any
$x,y\in\mathbb{R}^n$. Therefore, combining \eqref{Ballineq-equa-eps} and the stability version Theorem~\ref{PLhstab} of the Pr\'ekopa-Leindler inequality yields the existence of a $w\in \R^n$ such that
\begin{align*}
\int_{\R^n}|h(x)-f(x-w)|\,dx\leq &\tilde{\theta}_n\sqrt{\varepsilon}  
\cdot \int_{\R^n}h \\
\int_{\R^n}|h(x)-g(x+w)|\,dx\leq &\tilde{\theta}_n\sqrt{\varepsilon}   
\cdot\int_{\R^n}h
\end{align*}
where $\tilde{\theta}_n>1$ depends on $n$. For $C_n=\tilde{\theta}_n\int_{\R^n_+\cap B^n_2}x_1^2\;dx$ and $b_j=e^{w_j}$, $j=1,\ldots,n$, we deduce that
\begin{align*}
\int_{(\R^n_+\cap B^n_2)\Delta \Phi X}x_1^2\;dx\leq & C_n\sqrt{\varepsilon}\\
\int_{(\R^n_+\cap B^n_2)\Delta \Phi^{-1}Y}x_1^2\;dx\leq & C_n\sqrt{\varepsilon}
\end{align*}
where $\Phi$ is the diagonal transformation with eigenvalues $b_1,b_2,\ldots,b_n$. In turn, we conclude
\eqref{coordinatewisestab-strip-eq} by $x_1^2\geq \Lambda^2$ for  $x\not\in \Theta_{1,\Lambda}$. 

Regarding the bound for the eigenvalues, 
assume that $t_j>2R$ for some $j$. But then $\Phi X\subseteq \Theta_{e_j,\frac{1}{2}}\subseteq \bigcup_{i=1}^n\Theta_{e_i,\frac{1}{2}}$, hence
$$\big((\R^n_+\cap B^n_2)\Delta(\Phi X)\big)\backslash\Theta_{e_j,\frac{1}{2}}\supseteq \big((\R^n_+\cap B^n_2)\Delta(\Phi X)\big)\backslash\bigcup_{i=1}^n\Theta_{e_i,\frac{1}{2}}=(\R^n_+\cap B^n_2)\backslash\bigcup_{i=1}^n\Theta_{e_i,\frac{1}{2}},$$   
which contradicts \eqref{coordinatewisestab-strip-eq} with $\Lambda=\frac{1}{2}$ if $\varepsilon$ is small enough. The bound for $t_i^{-1}$ is similar.
\end{proof}

\begin{proposition}\label{prop-op}
Let $R,\delta>0$ and ${\cal P}$ be a Yao-Yao partition of $\R^n$, such that for all $A\in\mathcal{P}$ it holds $|(RB_2^n)\cap A|>\delta$. Assume that for some (linearly independent) unit vectors $v_1,\dots,v_n$, it holds ${\rm pos}\{v_1,\dots,v_n\}\in{\cal P}$ and set $\Psi:\R^n\to\R^n$ to be the linear map given by $\Psi(v_i)=e_i$, $i=1,\dots,n$. Then, there exists a constant $C=C(n,R,\delta)>0$ that depends only on $n,R,\delta$, such that $$\|\Psi\|_{\textnormal{op}},\|\Psi^{-1}\|_{\textnormal{op}}\leq C.$$ 
\end{proposition}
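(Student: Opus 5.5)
The plan is to bound the two operators separately: $\Psi^{-1}$ is trivial and $\Psi$ follows from a lower bound on $|\det\Psi^{-1}|$. Since $\Psi^{-1}e_i=v_i$ with $\|v_i\|=1$, we get $\|\Psi^{-1}\|_{\rm op}\le\sqrt n$ immediately. For $\Psi$, write $s_1\ge\dots\ge s_n>0$ for the singular values of $\Psi^{-1}$. Then
$$\|\Psi\|_{\rm op}=\frac1{s_n}=\frac{s_1\cdots s_{n-1}}{|\det\Psi^{-1}|}\le \frac{n^{(n-1)/2}}{|\det(v_1,\dots,v_n)|}.$$
So it suffices to show $|\det(v_1,\dots,v_n)|\ge c(n,R,\delta)>0$, i.e. that no cone of ${\cal P}$ is "degenerate". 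The hypothesis $|(RB^n_2)\cap A|>\delta$ for the one cone ${\rm pos}\{v_1,\dots,v_n\}$ alone is not enough, because a cone can be nearly a half-space. The Yao-Yao structure and the volume bound on \emph{all} cones must therefore be used. I would argue by induction on $n$. The case $n=1$ is trivial, since the cones are the two half-lines.

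By Definition \ref{def-Yao-Yao}, ${\cal P}$ is based on a hyperplane $F$ with axis $v$, a unit vector, and partitions ${\cal P}_\pm$ of $F$ with center $o$. Every cone of ${\cal P}$ has the form $A'+\R_{\pm}v$ with $A'\in{\cal P}_\pm$. Since the $v_i$ are determined only up to order, we may assume $v_n=\pm v$ and ${\rm pos}\{v_1,\dots,v_{n-1}\}=A'\in{\cal P}_\pm$.

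\emph{Step 1: the angle between $v$ and $F$ is bounded below.} Let $\alpha$ be the angle between $v$ and $F$, and let $p$ be the unit vector in the direction of the orthogonal projection of $v$ onto $F$. Since ${\cal P}_+$ covers $F$, some $A''\in{\cal P}_+$ contains $p$. I claim the cone $A''+\R_+v$ then lies, inside $RB^n_2$, in a slab-like region of width $O(R\alpha)$ around $F$, giving $|(A''+\R_+v)\cap RB^n_2|\le C_nR^n\alpha$. To see this, write a point as $a+tv$ with $a\in A''$. The distance of $a+tv$ to $F$ is $t\sin\alpha$. Moreover, because $p\in A''$, adding $tv$ moves the point along a direction nearly inside $A''$, so $|a+tv|\le R$ should force $t\sin\alpha$ to be small. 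I expect this estimate to be the main obstacle: one must control the component of $a+tv$ orthogonal to $F$ uniformly over the cone, and a sloppy bound loses the factor $\alpha$. If the direct estimate fails, I would fall back on a compactness argument: take a sequence of Yao-Yao partitions with $\alpha_k\to0$, pass to limits of $F_k,v_k$ on the sphere, and show that the cone $A''_k+\R_+v_k$ containing the projected direction has $RB^n_2$-volume tending to $0$. Either way, $\delta<C_nR^n\alpha$ gives $\sin\alpha\ge c(n,R,\delta)$.

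\emph{Step 2: reduce to dimension $n-1$.} With $\sin\alpha\ge c$, every $x=a+tv\in A\cap RB^n_2$ satisfies $|t|\le R/\sin\alpha$ and $|a|\le R+|t|\le R':=2R/\sin\alpha$. Hence $A\cap RB^n_2\subseteq (A'\cap R'B_F)+[0,R']v$, so
$$\delta<|A\cap RB^n_2|\le R'\,\mathcal H^{n-1}(A'\cap R'B_F).$$
Thus every cone of ${\cal P}_\pm$ has $(n-1)$-volume in $R'B_F$ at least $\delta':=\delta/R'$. By induction, applied inside $F$ with an isometry $F\cong\R^{n-1}$ and normalised spanning vectors, $|\det_{F}(v_1,\dots,v_{n-1})|\ge c(n-1,R',\delta')$.

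\emph{Step 3: combine.} We have
$$|\det(v_1,\dots,v_n)|=|\det_F(v_1,\dots,v_{n-1})|\cdot{\rm dist}(v_n,F)=|\det_F(v_1,\dots,v_{n-1})|\sin\alpha\ge c(n,R,\delta)>0.$$
Inserting this into the singular-value bound above gives $\|\Psi\|_{\rm op}\le C(n,R,\delta)$, and together with $\|\Psi^{-1}\|_{\rm op}\le\sqrt n$ this proves the proposition.
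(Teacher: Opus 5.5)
There is a genuine gap in Step 1. The rest of your proposal is sound: the reduction to a lower bound on $|\det(v_1,\dots,v_n)|$, the induction on $n$, and Steps 2--3. It also has the same architecture as the paper, which uses the shear $W$ with $W|_F=\mathrm{id}$, $Wv=e_n$ and writes $\Psi=\Psi_1\circ W$ instead of determinants.

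\textbf{Step 1 fails as stated.} The gap is not a matter of estimating more carefully: the claim that the cone of ${\cal P}_+$ \emph{containing} $p$ produces a thin cone of ${\cal P}$ is false. For $a\in F$ and $t\ge 0$ one has $\|a+tv\|^2=\|a\|^2+2t\cos\alpha\,\langle a,p\rangle+t^2$. The bound you need is $\|a+tv\|\ge t$, which gives $t\le R$ and hence height $t\sin\alpha\le R\sin\alpha$. That bound requires $\langle a,p\rangle\ge 0$ on the whole cone. Containing $p$ gives no such control, because the cone may also contain directions close to $-p$.

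Here is a concrete example in $\R^3$.
\begin{itemize}
\item Take $F=e_3^\perp$ and $v=\cos\alpha\,e_1+\sin\alpha\,e_3$, so $p=e_1$.
\item Let ${\cal P}_+$ be cut out by the lines of $F$ at angles $-\eta/2$ and $-\eta$. Any two distinct lines through $o$ give a Yao-Yao partition of $F$.
\item The cone $A''\in{\cal P}_+$ containing $p$ is spanned by the unit vectors $w_1,w_2$ at angles $-\eta/2$ and $\pi-\eta$.
\item The solid angle formula $\tan(\Omega/2)=|\det(w_1,w_2,v)|/(1+\langle w_1,w_2\rangle+\langle w_2,v\rangle+\langle v,w_1\rangle)$ gives $\tan(\Omega/2)\approx(\alpha\eta/2)/(\eta^2/2)=\alpha/\eta$.
\item So for $\alpha=\eta\to 0$, the solid angle of ${\rm pos}(A''\cup\{v\})$ stays near $\pi/2$, and its volume in $B^3_2$ stays about $\frac18|B^3_2|$.
\end{itemize}
This contradicts your bound $C_nR^n\alpha$. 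Your compactness fallback targets the same cone, so it cannot work either; the contradiction has to come from a different cone.

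\textbf{The fix.} The missing idea is the half-space property \eqref{Yao-Yao-cone-in-halfspace} of Yao-Yao partitions. Apply it inside $F$ to ${\cal P}_+$ and the half-space $H^+=\{x\in F:\langle x,p\rangle\ge 0\}$. This gives some $A'\in{\cal P}_+$ with $A'\subseteq H^+$. Then $\langle a,v\rangle=\cos\alpha\,\langle a,p\rangle\ge 0$ for all $a\in A'$, so $\|a+tv\|\ge t$. Hence $(A'+\R_+v)\cap RB^n_2$ lies in the one-sided slab of width $R\sin\alpha$ over $F$. That slab meets $RB^n_2$ in volume at most $\omega_{n-1}R^n\sin\alpha$. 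The hypothesis then gives $\sin\alpha>\delta/(\omega_{n-1}R^n)$. This is exactly the paper's argument, phrased there as $\langle v_n,e_n\rangle>s_0$. With this replacement for Step 1, your Steps 2 and 3 go through as written.
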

\begin{proof}
 First notice that since $\{e_1,\dots,e_n\}$ is an orthonormal basis, the estimate for $\|\Psi^{-1}\|_{\textnormal{op}}$ is automatic. Indeed, for $x=x_1v_1+\dots x_nv_n\in\R^n$, we have
 $$\left\|\sum_{i=1}^nx_iv_i\right\|^2=\sum_{i,j=1}^nx_ix_j\langle v_i,v_j\rangle\leq \sum_{i,j=1}^n|x_i||x_j|=(|x_1|+\dots+|x_n|)^2\leq n(x_1^2+\dots+x_n^2)=n\|\Psi(x)\|^2,$$
thus $\|\Psi^{-1}\|_{\textnormal{op}}\leq \sqrt{n}$. To deduce the estimate for $\|\Psi\|_{\textnormal{op}}$, we may clearly assume that $R=1$. We prove this estimate (with $R=1$) by induction in $n$, with the case $n=1$ being trivial. Let us assume that the estimate $\|\Psi\|_{\textnormal{op}}\leq C(n-1,\delta)$ is valid in $\R^{n-1}$.

If ${\cal P}$, $v_1,\dots,v_n$ are as in the statement of Proposition  \ref{prop-op}, then we may clearly assume that ${\cal P}$ is based on $e_n^\perp\equiv \R^{n-1}$, that $v_n$ is the axis of ${\cal P}$ and that $\langle v_n,e_n\rangle>0$. Let ${\cal P}_+$ be the Yao-Yao partition of $e_n^\perp$ induced by $\mathcal{P}$, $v_n$. Then, ${\rm pos}\{v_1,\dots,v_{n-1}\}\in{\cal P}_+$. Set also $v'_n:=v_n-\langle v_n,e_n\rangle e_n$ to be the orthogonal projection of $v_n$ onto $e_n^\perp$. By approximation, we may assume that $v'_n\neq o$. Clearly, there exists a (computable) constant $0<s_0<1$, such that \begin{equation}\label{eq-ctr}|B_2^n\cap \Theta_{e_n,s_0}|<\delta.\end{equation}Set $H^+:=\{x\in e_n^\perp:\langle x,v_n'\rangle\geq 0\}$. Then, \eqref{Yao-Yao-cone-in-halfspace} shows that there exists $A'\in{\cal P}_+$, such that $A'\subseteq H^+$. On the other hand, it can be easily seen that
$$B_2^n\cap {\rm pos}(H^+\cup \{v_n\})\subseteq B_2^n\cap \{x:\in \R^n:0\leq \langle x,v_n\rangle\leq \langle v_n,e_n\rangle\}\subseteq B_2^n\cap \Theta_{e_n,\langle v_n,e_n\rangle}.$$We conclude that for $A:={\rm pos}(A'\cup\{v_n\})\in{\cal P}$, 
$$|A\cap B_2^n|\leq |B_2^n\cap \Theta_{e_n,\langle v_n,e_n\rangle}|$$and therefore by \eqref{eq-ctr}, we arrive at
$$\langle v_n,e_n\rangle>s_0.$$Consider the linear map $W:\R^n\to\R^n$ given by $W|e_n^\perp=id_{e_n^\perp}$, $W(v_n)=e_n$. Then, one can easily compute $$\|W\|_{\textnormal{op}}\leq \left(1-\sqrt{1-s_0^2}\right)^{-1/2}=:t_0,
\qquad |\det W|\geq 1.$$For any $A'\in {\cal P}_+$, we have $A:={\rm pos}(A'\cup \{v_n\})\in {\cal P}$ and also,
\begin{eqnarray*}
\delta&\leq& |\det W||A\cap B_2^n|=|W(A\cap B_2^n)|\leq |W(A)\cap (t_0B_2^n)|=t_0^n|W(A)\cap B_2^n|\\
&=&t_0^n\int_0^1\big|B_2^n\cap {\rm pos}(A'\cup\{e_n\})\cap (e_n^\perp+te_n)\big|_{n-1}dt\\
&=&t_0^n\int_0^1\big|\sqrt{1-t^2}(B_2^n\cap {\rm pos}(A'\cup\{e_n\})\cap e_n^\perp)\big|_{n-1}dt\\
&=&t_0^n\int_0^1(1-t^2)^{(n-1)/2}dt\cdot\big|B_2^{n-1}\cap A'|_{n-1}=:a_n\big|B_2^{n-1}\cap A'\big|_{n-1}.
\end{eqnarray*}
It follows that the assumptions of Proposition \ref{prop-op} are satisfied for the Yao-Yao partition ${\cal P}_+$ of $\R^{n-1}$ with $R=1$ and $\delta':=a_n^{-1}\delta$ in the place of $\delta$, hence the inductive hypothesis gives
$$\|\Psi_1\|_{\textnormal{op}}<C(n-1,\delta'),$$where $\Psi_1:\R^n\to\R^n$ is the linear map given by $\Psi_1(v_i)=e_i$, $i=1,\dots,n-1$, $\Psi_1(e_n)=e_n$. We conclude that since $\Psi=\Psi_1\circ W$,
$$\|\Psi\|_{\textnormal{op}}\leq \|\Psi_1\|_{\textnormal{op}}\|W\|_{\textnormal{op}}\leq C(n-1,\delta')t_0,$$proving our claim.
\end{proof}

Now, we are ready to prove a rather technical stability version of 
Theorem~\ref{thm-main} that will lead to a stability version of Theorem~\ref{thm-Ball-conj}. 

\begin{proposition}
\label{thm-main-stab}
Let $u\in S^{n-1}$ and  $K\subseteq \R^n$ be a symmetric convex body, 
such that $K^\circ$ is in isotropic position with $|K^\circ|=\omega_n$, 
 and let $\mathcal{P}$ be a Yao-Yao equipartition of the measure  $d\mu(x)=\langle x,u\rangle^2\mathbf{1}_K(x)\,dx$ where $\mathcal{P}$ is centered at the origin, is based on  $u^\bot$ and has axis $v\in S^{n-1}$. There exist explicit $C>0$, $R>n$ and $\varepsilon_0\in(0,1)$ depending only on $n$ satisfying that if
\begin{equation}
\label{K-sqxu-eps-stab-eq}
\int_K\langle x,u\rangle^2\,dx\cdot\int_{K^\circ}\langle y,u\rangle^2\,dy\geq (1-\varepsilon)\left(\int_{B^n_2}\langle x,u\rangle^2\,dx\right)^2
\end{equation}
for $\varepsilon\in(0,\varepsilon_0)$, then 
\begin{equation}
\label{thm-main-stabuv-eq}
|\langle u,v\rangle|\geq {\color{black}\sqrt{1-\varepsilon}}
\end{equation}
and for any $A\in\mathcal{P}$, one finds an $o$-symmetric ellipsoid $E_A$ (depending on $A$ and $K$) such that
\begin{align}
\label{thm-main-stab-nostrip}
\big|\big((K\Delta E_A)\cap A\big)\backslash \Theta_{u,\Lambda}\big|\leq &\frac{C}{\Lambda^2}\cdot\varepsilon^{\frac{1}{2}} \mbox{ \ 
 for any $\Lambda>0$,}\\
\label{thm-main-stab-in-out-radius}
\frac1R B^n_2\subseteq& E_A\subseteq RB^n_2. 
 \end{align}
\end{proposition}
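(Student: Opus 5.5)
We follow the proof of Theorem~\ref{thm-main} step by step, keeping track of the deficit at each inequality. Assume $u=e_1$ and write $v$ for the axis of $\mathcal P$. As in Section~\ref{secBall-conj}, let $T$ be the shear with $Tv=\langle e_1,v\rangle e_1$ and $T|_{e_1^\perp}={\rm id}$, and set $\widetilde K=TK$. Then \eqref{eq-TA cap L}, \eqref{v-vs-e1-in-polar-body} and the isotropicity of $K^\circ$ give
$$\int_{\widetilde K}x_1^2\int_{\widetilde K^\circ}x_1^2=\langle e_1,v\rangle^{-2}\int_K x_1^2\int_{K^\circ}x_1^2 .$$
The left-hand side is at most $(\int_{B_2^n}x_1^2)^2$ by the argument of Theorem~\ref{thm-main}. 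Combining this with \eqref{K-sqxu-eps-stab-eq} yields $\langle e_1,v\rangle^2\ge 1-\varepsilon$, which is \eqref{thm-main-stabuv-eq}. The same chain also shows that $\widetilde K$ satisfies the deficit hypothesis with the same $\varepsilon$.

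Next we localize the deficit to the cones. By \eqref{measure-decomp-into-2^n-parts}, the product for $\widetilde K$ equals $2^n\sum_{D}a_D b_D$, where $a_D=\int_{\widetilde K\cap D^*}x_1^2=2^{-n}\int_{\widetilde K}x_1^2$ and $b_D=\int_{\widetilde K^\circ\cap D}x_1^2$. Each term satisfies $a_Db_D\le 2^{-2n}(\int_{B_2^n}x_1^2)^2$ by \eqref{eq-wish}, and the sum is at least $(1-\varepsilon)(\int_{B_2^n}x_1^2)^2$. Since there are $2^n$ terms, each one satisfies $a_Db_D\ge(1-2^n\varepsilon)2^{-2n}(\int_{B_2^n}x_1^2)^2$.

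Now fix $B=T A$ with $A\in\mathcal P$, and take the map $S$ used for \eqref{eq-wish}, so that $X=\R^n_+\cap S\widetilde K$ and $Y=\R^n_+\cap(S\widetilde K)^\circ$ satisfy the hypotheses of Proposition~\ref{coordinatewisestab0} with $\varepsilon$ replaced by $2^n\varepsilon$. The proposition gives a diagonal $\Phi$ with
$$|((\R^n_+\cap B_2^n)\Delta\Phi X)\setminus\Theta_{e_1,\Lambda}|\le\theta_n\Lambda^{-2}\sqrt{2^n\varepsilon}.$$
Define $E_A=T^{-1}S^{-1}\Phi^{-1}B_2^n$. This is an $o$-symmetric ellipsoid, because $\Phi$ is diagonal and the ellipsoid it defines is unconditional. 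Pulling the estimate back by $(\Phi S T)^{-1}$ gives \eqref{thm-main-stab-nostrip}, provided the strip is preserved and the volume distortion is bounded. The strip is preserved because $S$ and $T$ fix $x_1$. The term $\Phi$, however, rescales $x_1$ by $b_1$ and changes volume by $\det\Phi$, so this step needs control on $\Phi$, $S$ and $T$.

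This control is the main obstacle, and we obtain it in three parts.
\begin{enumerate}
\item For $T$: by \eqref{thm-main-stabuv-eq} we have $\langle e_1,v\rangle\ge\sqrt{1-\varepsilon_0}$, so $\|T\|_{\rm op}$ and $\|T^{-1}\|_{\rm op}$ are bounded.
\item For $S$: we use \eqref{ball-in-out-isotropic}. Since $|K^\circ|=\omega_n$ and $K^\circ$ is isotropic, $\frac1nB_2^n\subseteq K,K^\circ\subseteq nB_2^n$, and hence $\widetilde K$ is comparable to $B_2^n$ with constants depending only on $n$. Each cone carries $\mu$-mass $2^{-n}\int_{\widetilde K}x_1^2\ge c_n$, and since $x_1^2\le n^2$ on $\widetilde K$, its Lebesgue volume inside $nB_2^n$ is bounded below. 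Proposition~\ref{prop-op}, applied with $R=n$ and $\delta=c_n$ to the generators of $C$ together with $e_1$, then bounds $\|S\|_{\rm op}$ and $\|S^{-1}\|_{\rm op}$.
\item For $\Phi$: by the previous two parts, $X,Y\subseteq R'B_2^n$ for some $R'=R'(n)$. The eigenvalue clause of Proposition~\ref{coordinatewisestab0} then bounds the eigenvalues of $\Phi$ and $\Phi^{-1}$ by $2R'$, once $\varepsilon_0$ is chosen small enough.
\end{enumerate}
These three bounds give \eqref{thm-main-stab-in-out-radius} for a suitable $R(n)$. They also give \eqref{thm-main-stab-nostrip}: the strip $\Theta_{e_1,\Lambda}$ is mapped into $\Theta_{e_1,\Lambda/b_1}$, and we absorb the factor $b_1^{\pm2}$ and the Jacobian into $C$.

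One subtlety remains: the partition is attached to $K$ rather than to $\widetilde K$. This is harmless, since the ellipsoid constructed for the cone $TA$ of $\widetilde K$ is pulled back by $T^{-1}$, and $T^{-1}(TA\cap\widetilde K)=A\cap K$.
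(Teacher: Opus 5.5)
Your proposal is correct and follows the paper's proof essentially step by step: the shear $T$ and the bound on $\langle u,v\rangle$, the per-cone deficit obtained from the $2^n$-term decomposition, the normalization $S$ controlled by Proposition~\ref{prop-op}, Proposition~\ref{coordinatewisestab0} together with its eigenvalue clause, and the pull-back by $(ST)^{-1}$. Your per-cone deficit $2^n\varepsilon$ is slightly sharper than the paper's $2^{2n}\varepsilon$. One detail to tidy: take $S$ to be the specific map sending the unit generators to the $e_i$, as the paper does, rather than the unimodular $S$ from \eqref{eq-wish}, whose norm Proposition~\ref{prop-op} does not directly control. This change is harmless, because the product of the two integrals is invariant under $S$ whatever $\det S$ is.
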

\begin{proof} As $K^\circ$ is in isotropic position with $|K^\circ|=\omega_n$, $K$ satisfies \eqref{ball-in-out-isotropic}.
We may assume that $\langle u,v\rangle>0$. 
{ \color{black}Let us consider again the linear transfrom
$T|_{u^\bot}=\textnormal{Id}$,  $T(v)=\langle u,v\rangle u$, which has the properties }
 $\det T=1$ and $T^{-1}(u)=\frac{v}{\langle u,v\rangle}$ and the body $\widetilde{K}=TK$.
{\color{black}Recalling \eqref{eq-TA cap L} and \eqref{v-vs-e1-in-polar-body} one has}
\begin{equation}
\label{sqxuKcirc-sqxuTKcirc}
\int_{\widetilde{K}}\langle y,u\rangle^2\,dy\cdot \int_{\widetilde{K}^\circ}\langle y,u\rangle^2\,dy=\frac1{\langle u,v\rangle{\color{black}^2}}\int_K\langle x,u\rangle^2\,dx\cdot\int_{K^\circ}\langle x,v\rangle^2\,dx.
\end{equation}
As the left hand side of \eqref{sqxuKcirc-sqxuTKcirc} is at most $\left(\int_{B^n_2}\langle x,u\rangle^2\,dx\right)^2$ by Theorem~~\ref{thm-main}, we deduce the estimate $\langle u,v\rangle>{\color{black}\sqrt{1-\varepsilon}}$ in \eqref{thm-main-stabuv-eq} from \eqref{sqxuKcirc-sqxuTKcirc} and the condition \eqref{K-sqxu-eps-stab-eq}. Further, for small enough $\varepsilon$ this yields \begin{equation}\label{thm13-stab-Toperator-norm-claim}
{\color{black}\|T^{-1}\|_{\textnormal{op}},~\|T\|_{\textnormal{op}}\leq 2.}
\end{equation}

To prove 
\eqref{thm-main-stab-nostrip}, we observe that (by \eqref{eq-B cap L}) $\widetilde{\mathcal{P}}:=\{TA:\,A\in\mathcal{P}\}$ is a Yao-Yao equipartition of $\R^n$ with respect to the measure $d\tilde{\mu}(y)=\langle y,u\rangle^2\mathbf{1}_{\widetilde{K}}(y)\,dy$; moreover, $\widetilde{\mathcal{P}}$ is based on $u^\bot$ and has $u$ as its axis and the origin as its center. 
Let $A\in\mathcal{P}$. 
 We may assume that $A=A'+\R_+v$ where $A'\in\mathcal{P}_+$ and $\mathcal{P}_+$ is the induced by $\mathcal{P}$, $u$ Yao-Yao partition of $u^\bot$, and $A'={\rm pos}\{v_1,\ldots,v_{n-1}\}$ for independent $v_1,\ldots,v_{n-1}\in u^\bot\cap S^{n-1}$.
In particular, $\widetilde{A}:=TA={\rm pos}\{A',u\}\in \widetilde{\mathcal{P}}$.

It follows from the condition \eqref{K-sqxu-eps-stab-eq}, \eqref{measure-decomp-into-2^n-parts} and \eqref{sqxuKcirc-sqxuTKcirc} that
\begin{align*}
(1-\varepsilon)2^{2n}\left( \int_{\R^n_+\cap B^n_2}\langle x,u\rangle^2\,dx\right)^2=&
(1-\varepsilon)\left(\int_{B^n_2}\langle x,u\rangle^2\,dx\right)^2\leq\int_{\widetilde{K}^\circ}\langle x,u\rangle^2\,dx\cdot \int_{\widetilde{K}}\langle x,u\rangle^2\,dx\\
=&
2^n\sum_{D\in\widetilde{\mathcal{P}}}\int_{D\cap \widetilde{K}}\langle x,u\rangle^2\,dx\cdot \int_{D^*\cap\widetilde{K}^\circ}\langle x,u\rangle^2\,dx.
\end{align*}  
But, for any $D\in \widetilde{\mathcal{P}}$, 
 recalling \eqref{eq-wish}, 
we deduce that
\begin{equation}
\label{tildeAcond}
\int_{\widetilde{A}\cap \widetilde{K}}\langle x,u\rangle^2\,dx\cdot \int_{\widetilde{A}^*\cap\widetilde{K}^\circ}\langle x,u\rangle^2\,dx\geq 
(1-2^{2n}\varepsilon)\left(\int_{\R^n_+\cap B^n_2}\langle x,u\rangle^2\,dx\right)^2.
\end{equation}

{\color{black}
{\color{black}Next, notice that $V:=|\widetilde{A}\cap B_2^n|$ has an explicit lower bound that depends only on $n$.
Indeed, from \eqref{ball-in-out-isotropic}, \eqref{thm13-stab-Toperator-norm-claim} and the facts that $\widetilde{\mathcal{P}}$ is a Yao-Yao equipartition for $\widetilde{\mu}$
and  $\langle u,x\rangle^2\leq (2n)^2$ for $x\in \widetilde{K}$, we obtain
\begin{equation}\label{V-lower-bound}V\geq\frac{|\widetilde{A}\cap \widetilde{K}|}{(2n)^n}\geq \frac1{(2n)^{2+n}}\int_{\widetilde{A}\cap \widetilde{K}}\langle u,x\rangle^2\,dx =\frac{1}{2^{2(1+n)}n^{2+n}}\int_{\widetilde{K}}\langle u,x\rangle^2\,dx\geq \frac{1}{2^{2(1+n)}n^{2+n}}\int_{\frac1{2n}B^n_2}\langle u,x\rangle^2\,dx   
\end{equation}}
Without loss of generality, we choose $u=e_n$. We consider the linear transform $S$ with  {\color{black}$Sv_i=e_i$} for $i=1,\ldots,n-1$ and $Su=u$. 
{\color{black}By Proposition \ref{prop-op}
 \begin{equation}\label{thm13-stab-Soperator-norm-claim}\|S\|_{\textnormal{op}},~\|S^{-1}\|_{\textnormal{op}}\leq M\end{equation}}
 for some constant $M>1$ that depends only on $n$. 
 
Let $L:=S\widetilde{K}$.
Recalling \eqref{eq-7} and \eqref{eq-9}, \eqref{tildeAcond} implies}
\begin{align}
\int_{\R^n_+\cap L}\langle x,u\rangle^2\,dx\cdot \int_{\R^n_+\cap L^\circ}\langle x,u\rangle^2\,dx
\label{StildeAcond}
\geq &(1-2^{2n}\varepsilon)\left(\int_{\R^n_+\cap B^n_2}\langle x,u\rangle^2\,dx\right)^2.
\end{align}

Given \eqref{StildeAcond}, we deduce from 
Proposition~\ref{coordinatewisestab0} 
the existence of a $\tilde{\theta}_n>1$ depending on $n$ and a positive definite diagonal matrix $\Phi$ (with respect to the orthonormal basis $e_1,\ldots,e_n$) depending on $L$ and $\varepsilon$ such that
\begin{align}
\label{coordinatewisestab-strip-PhiL}
\left|\big((\R^n_+\cap B^n_2)\Delta\Phi (\R^n_+\cap L)\big)\left\backslash \Theta_{u,\widetilde{\Lambda}}\right.\right|<&\frac{\tilde{\theta}_n}{\widetilde{\Lambda}^2}\cdot \sqrt{\varepsilon} 
\end{align}
for any $\widetilde{\Lambda}>0$ (recall that $e_n=u$).
Furthermore, by \eqref{ball-in-out-isotropic}, \eqref{thm13-stab-Toperator-norm-claim}, \eqref{thm13-stab-Soperator-norm-claim} and Proposition \ref{coordinatewisestab0}, the eigenvalues of both $\Phi$ and $\Phi^{-1}$ are bounded by $4nM$.
In particular, $\det\Phi^{-1}\leq (4nM)^n$ and $\Phi^{-1}\Theta_{u,\widetilde{\Lambda}}\subseteq\Theta_{u,\Lambda}$ for $\Lambda=4nM\widetilde{\Lambda}$.

It follows from  this and from 
\eqref{coordinatewisestab-strip-PhiL} 
 that the unconditional ellipsoid $\widetilde{E}=\Phi^{-1} B^n_2$ satisfies
 
$$\left|\big((L\Delta\widetilde{E})\cap\R^n_+\big)\left\backslash \Theta_{u,\Lambda}\right.\right|<\frac{C}{\Lambda^2}\cdot \sqrt{\varepsilon} 
$$
for any $\Lambda>0$, where $C=(4nM)^n\tilde{\theta}_n$ depends only on $n$. Now $\Psi:=(ST)^{-1}$ satisfies {\color{black}$\|\Psi\|_{\textnormal{op}}\leq2M$}, $\Psi \Theta_{u,\Lambda}=\Theta_{u,\Lambda}$ (this follows from \eqref{eq-T^{-1}}) and $A\cap K=\Psi(\R^n_+\cap L)$. Thus, we conclude 
\eqref{thm-main-stab-nostrip} with $E_A=\Psi\widetilde{E}$.

{\color{black}Finally, \eqref{thm-main-stab-in-out-radius} is deduced from $\|\Phi\|_{\textnormal{op}},~\|\Phi^{-1}\|_{\textnormal{op}}\leq 4nM$ and $\|\Psi\|_{\textnormal{op}},~\|\Psi^{-1}\|_\textnormal{op}\leq2M$.}
\end{proof}

\section{Stability of Theorem~\ref{thm-Ball-conj}}
\label{secBall-ineq-stab}


During the proof of our stability statements concerning a symmetric convex body $K\subseteq\R^n$, we use various $o$-symmetric ellipsoids to approximate $K$ on a smaller part of $\R^n$, and the following lemma ensures that these ellipsoids are very close to each other.

\begin{lemma}
\label{EEprime-sandwiched}
For $n\geq 2$, $R>1$ and  $s\in (0,\frac12)$, there exist explicit $\delta,C>0$ depending on $n,R,s$ such that if $\sigma\subseteq \R^n$ is a convex cone with $r(\sigma\cap B^n_2)\geq s$, and the $o$-symmetric ellipsoids $E,E'\subseteq\R^n$ satisfy 
$$\frac1{R}\,B^n_2\subseteq E,
{\color{black}E'}\subseteq RB^n_2 \mbox{ \ and \ }|(E\Delta E')\cap \sigma|\leq \delta,
$$ 
then
$$
|E\Delta E'|\leq C \big|(E\Delta E')\cap \sigma\big|.
$$
\end{lemma}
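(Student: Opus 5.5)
Write $E=Q^{-1/2}B^n_2$ and $E'=Q'^{-1/2}B^n_2$ with positive definite $Q,Q'$, i.e.\ $E=\{x:\langle Qx,x\rangle\le1\}$, $E'=\{x:\langle Q'x,x\rangle\le 1\}$. The inclusions $\frac1R B^n_2\subseteq E,E'\subseteq RB^n_2$ mean that all eigenvalues of $Q,Q'$ lie in $[R^{-2},R^2]$. So the pairs $(Q,Q')$ range over a compact set $\mathcal{K}_R$ of pairs of symmetric matrices. The plan is to compare both $|E\Delta E'|$ and $|(E\Delta E')\cap\sigma|$ with the single quantity $\|Q-Q'\|$ (any fixed matrix norm). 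We show
\[
|E\Delta E'|\le c_1\|Q-Q'\| \qquad\text{and}\qquad |(E\Delta E')\cap\sigma|\ge c_2\|Q-Q'\|
\]
whenever the right-hand side of the second bound is small, with $c_1,c_2$ depending only on $n,R,s$.

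The upper bound is routine. In polar coordinates, the radial functions are $\rho_E(\xi)=\langle Q\xi,\xi\rangle^{-1/2}$, so
\[
|E\Delta E'|=\frac1n\int_{S^{n-1}}|\rho_E^n-\rho_{E'}^n|\,d\xi .
\]
On $[R^{-2},R^2]$ the map $t\mapsto t^{-n/2}$ is Lipschitz, hence $|\rho_E^n-\rho_{E'}^n|\le c\,|\langle (Q-Q')\xi,\xi\rangle|$, which gives $|E\Delta E'|\le c_1\|Q-Q'\|$.

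For the lower bound, the same polar formula applies on $\sigma$, with a reverse Lipschitz bound: the derivative of $t^{-n/2}$ is bounded below on $[R^{-2},R^2]$. This gives
\[
|(E\Delta E')\cap\sigma|\ge c\int_{\sigma\cap S^{n-1}}|\langle M\xi,\xi\rangle|\,d\xi,\qquad M=Q-Q'.
\]
We therefore need the key estimate
\[
\int_{\sigma\cap S^{n-1}}|\langle M\xi,\xi\rangle|\,d\xi\ge c(n,s)\|M\|\qquad\text{for all symmetric } M .
\]
By homogeneity it suffices to take $\|M\|=1$. Since $r(\sigma\cap B^n_2)\ge s$, the cone $\sigma$ contains a ball $B(z,s)\subseteq B^n_2$, so it contains a spherical cap $D$ of $S^{n-1}$ of angular radius $\alpha(s)>0$ around $z/\|z\|$. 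The quadratic form $\langle M\xi,\xi\rangle$ is a nonzero polynomial and cannot vanish identically on the open cap $D$: if it did, it would vanish on the open cone over $D$, hence everywhere, so $M=0$.

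The main obstacle is making this uniform. The function
\[
(M,w)\mapsto \int_{D(w,\alpha)}|\langle M\xi,\xi\rangle|\,d\xi
\]
is continuous and positive on the compact set $\{\|M\|=1\}\times S^{n-1}$, so its minimum is a positive constant $c(n,s)$. This compactness argument suffices for existence. For explicit constants, we would instead invoke a Remez-type inequality for degree-$2$ polynomials on a cap. Either way, $|(E\Delta E')\cap\sigma|\ge c_2\|Q-Q'\|$ holds with no smallness assumption at all, since the eigenvalue bounds already give the Lipschitz constants. Combining the two bounds yields $|E\Delta E'|\le (c_1/c_2)\,|(E\Delta E')\cap\sigma|$. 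Consequently the hypothesis $|(E\Delta E')\cap\sigma|\le\delta$ is not really needed; any $\delta$ (for instance $\delta=1$) works, with $C=c_1/c_2$.
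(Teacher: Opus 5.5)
Your argument is correct, and it takes a genuinely different route from the paper's.

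\textbf{Your route.} You measure the distance between the two ellipsoids by the single quantity $\|Q-Q'\|$. The two-sided Lipschitz bounds for $t\mapsto t^{-n/2}$ on $[R^{-2},R^2]$ turn both $|E\Delta E'|$ and $|(E\Delta E')\cap\sigma|$ into integrals of $|\langle (Q-Q')\xi,\xi\rangle|$, over $S^{n-1}$ and over $\sigma\cap S^{n-1}$ respectively. The lemma then reduces to a norm equivalence for quadratic forms restricted to a cap of angular radius $\arcsin s$. You obtain this by compactness, together with the fact that a quadratic form vanishing on an open subset of the sphere is zero. By orthogonal invariance of the norm you may even fix the cap, so continuity in $w$ is not needed.

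\textbf{The paper's route.} The paper maps $E'$ to $B^n_2$ by a positive definite $\Phi$, which forces it to track how the cone $\sigma$ is distorted. It then diagonalizes $\Phi E$ and measures the deviation by $\varepsilon=\max_i|a_i-1|$. Next it builds by hand a subset $\Xi$ of a cap in $\Phi\sigma$ on which $|\sum_i(a_i^2-1)x_i^2|\gg\varepsilon$. The hypothesis $|(E\Delta E')\cap\sigma|\le\delta$ is used only to force $\varepsilon\le\frac12$ in the upper estimate $|\Phi E\,\Delta\, B^n_2|\ll\varepsilon$.

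\textbf{Comparison.} Your version is symmetric in $E,E'$ and avoids both the change of variables and the sign and case analysis behind $\Xi$. It also correctly shows that the smallness hypothesis is superfluous; this is clear a posteriori too, since $|E\Delta E'|\le 2\omega_nR^n$ always.

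What the paper's hands-on approach aims to buy is explicitness of $\delta$ and $C$, which the statement asserts. Your compactness step gives only existence. To match the statement you would have to actually carry out the Remez-type step you mention. For instance, compare $\sup_{B^n_2}|\langle Mx,x\rangle|=\|M\|_{\textnormal{op}}$ with the supremum, and then the $L^1$ norm, of $\langle Mx,x\rangle$ on the convex set ${\rm pos}(D)\cap B^n_2$, via a Brudnyi--Ganzburg type inequality.
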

\begin{proof} During the proof of Lemma~\ref{EEprime-sandwiched}, we write $g\gg h$ (resp. $h\ll g$) 
for some positive expressions $g$ and $h$ if $g\geq ch$ (resp. $g\leq ch$) where $c>0$ is an explicit constant that depends only on $n,R,s$. 

There exists a positive definite symmetric matrix $\Phi\in{\rm GL}(n)$ such that $\Phi E'=B^n_2$, and the eigenvalues of $\Phi$ are between $\frac1{R}$ and $R$, and hence $R^{-n}\leq \det\Phi\leq R^n$. In particular, 
the convex cone $\tilde{\sigma}=\Phi \sigma$ satisfies 
\begin{align}
\label{rtildeEB-tildesigma}
r(\tilde{\sigma}\cap B^n_2)\geq & \frac{s}{R},
\end{align}
and for the $o$-symmetric ellipsoid $\widetilde{E}=\Phi E$ it is sufficient to find explicit $\delta,C>0$ depending on $n,R,s$ such that 
\begin{equation}
\label{tildeEB-sandwiched-claim}
\mbox{if \ }\big|(\widetilde{E}\Delta B^n_2)\cap \tilde{\sigma}\big|\leq \delta\cdot R^n, \mbox{ \ then }
|\widetilde{E}\Delta B^n_2|\leq \frac{C}{R^n} \cdot\big|(\widetilde{E}\Delta B^n_2)\cap {\color{black}\widetilde{\sigma}}\big|.
\end{equation}

 To prove \eqref{tildeEB-sandwiched-claim}, 
 assume without loss of generality that the radial function of $\tilde{E}$ is 
 $$\varrho_{\widetilde{E}}(x)=\frac{1}{\sqrt{\sum_{i=1}^n(a_ix_i)^2}},$$ where $a_1\geq\dots\geq a_n>0$
 . The core claim is that if 
 \eqref{rtildeEB-tildesigma} holds, and $\max_{i=1}^n\{|a_i-1|\}\geq \varepsilon$ for $\varepsilon\in(0,\frac12)$, then
\begin{equation}
\label{E-cap-Bn2-maxai}
\left|(\widetilde{E}\Delta B^n_2)\cap \tilde{\sigma}\right|\geq c\varepsilon
\end{equation}
where 
$c>0$
depends only on $n,R,s$.

For $u\in S^{n-1}$ and $\psi\in(0,\frac{\pi}2]$, we write $B(u,\psi)$ 
to denote the spherical ball (cap) of center $u$ and geodesic radius $\psi$. 
According to  \eqref{rtildeEB-tildesigma}, there exists $w\in S^{n-1}$ such that 
\begin{equation*}
B(w,\alpha)\subseteq \tilde{\sigma}\cap S^{n-1} \mbox{ \ where   }\sin\alpha=\frac{s}{R}.
\end{equation*}
Without loss of generality $\alpha\leq\frac{\pi}{4}$. For appropriate $p=\sum_{i=1}^np_ie_i\in B(w,\alpha)$ having geodesic distance $\alpha/2$ from $w$ it is true that any $x\in B(p,\frac{\alpha}{4})\subseteq \widetilde{\sigma}\cap S^{n-1}$ has distance at least $\alpha/4$ from $e_1,\dots,e_n$, hence, by $R^{-2}B_2^n\subseteq\widetilde{E}\subseteq R^2B_2^n$,
\begin{equation}\label{lower bound for rho_E-1}
    |1-\varrho_{\widetilde{E}}(x)|\gg\left|\frac{1}{\varrho_{\widetilde{E}}(x)^2}-1\right|=\left|\sum_{i=1}^nb_ix_i^2\right|
\end{equation}
where $b_i:=a_i^2-1$ and $b_1\geq\dots\geq b_n$. We focus on the case 
$a_n\leq1-\varepsilon$, thus $b_n\leq-2\varepsilon+\varepsilon^2<-\varepsilon$. The case $a_n>1-\varepsilon$ (and hence $a_1\geq1+\varepsilon$) is similar.
Call $k\in\{1,\dots,n\}$ the smallest integer for which $b_k<0$,
thus $b_i<0$ for $i\geq k$ and $b_i\geq0$ for $i<k$. 
Let $\epsilon_0\in\{\pm1\}$ be such that $\epsilon_0\sum_{i=1}^nb_ip_i^2\geq0$. For an appropriate constant $c_0>0$ depending only on $n,\alpha$ the set
$$\Xi:=\left\{x\in B\left(w',\frac{\alpha}{4}\right):\epsilon_0x_i^2\geq p_i^2+c_0 \textnormal{ for } i<k \textnormal{ and } \epsilon_0x_i^2\leq p_i^2-c_0 \textnormal{ for } i\geq k\right\}$$ has positive $\Ha$ measure, bounded from below by a positive constant that depends only on $n,\alpha$.
Furthermore, for any $x\in\Xi$,
$$\epsilon_0\sum_{i=1}^nb_ix_i^2\geq \epsilon_0\sum_{i=1}^{k-1}b_i(p_i^2+c_0)+\epsilon_0\sum_{i=k}^nb_i(p_i^2-c_0)\geq \epsilon_0c_0\left(\sum_{i=1}^{k-1}b_i-\sum_{i=k}^nb_i\right)\geq -\epsilon_0c_0b_n\geq\epsilon_0c_0\varepsilon,$$
which, combined with \eqref{lower bound for rho_E-1}, gives
$|1-\varrho_{\widetilde{E}}(x)|\gg \varepsilon$ and hence
$$\left|(\widetilde{E}\Delta B^n_2)\cap \tilde{\sigma}\right|\geq\frac1n\int_{B(w',\frac{\alpha}{4})}\left|1-\varrho_{\widetilde{E}}^n\right|\,d\mathcal{H}^{n-1} 
\gg \int_{\Xi}\left|1-\varrho_{\widetilde{E}}^n\right|\,d\mathcal{H}^{n-1}
\geq \int_{\Xi}\left|1-\varrho_{\widetilde{E}}\right|\,d\mathcal{H}^{n-1}\gg
\varepsilon
$$



Finally, having the core claim \eqref{E-cap-Bn2-maxai} at hand, we set $\delta=\frac{c}{2}$, where $c$ is the constant in \eqref{E-cap-Bn2-maxai}, and $\varepsilon=\max_{i=1}^n\{|a_i-1|\}$. If $\left|({\color{black}\widetilde{E}}\Delta B^n_2)\cap \tilde{\sigma}\right|\leq \delta$, then  $\varepsilon\leq \frac1{2}$, and hence 
$$
\left|{\color{black}\widetilde{E}}\Delta B^n_2\right|\leq \omega_n\left(\frac{1}{(1-\varepsilon)^n}-\frac{1}{(1+\varepsilon)^n}\right)\ll((1+\varepsilon)^n-(1-\varepsilon)^n)\ll\varepsilon
\leq \frac{1}{c}\cdot \left|({\color{black}\widetilde{E}}\Delta B^n_2)\cap \tilde{\sigma}\right|,
$$
verifying \eqref{tildeEB-sandwiched-claim} 
and in turn proving Lemma~\ref{EEprime-sandwiched}.
\end{proof}

We are ready to verify the stability version Theorem~\ref{Ball-ineq-stab}, - equivalent to Theorem~\ref{Ball-ineq-stab0} - of Theorem~\ref{thm-Ball-conj}.
\begin{theorem}
\label{Ball-ineq-stab}
There exist $C_n>1$ and $\varepsilon_0\in(0,1)$ depending on $n\geq 2$ such that if $\varepsilon\in (0,{\color{black}\varepsilon_0})$ and symmetric convex body $K\subseteq \R^n$ satisfy
\begin{equation}
\label{Ball-ineq-uncond-stab-cond}
\int_K\int_{K^\circ}\langle x,y\rangle^2\,dydx> \left(1-\varepsilon\right) \int_{B^n_2}\int_{B^n_2}\langle x,y\rangle^2\,dydx,
\end{equation}
then 
$$|E\Delta K|\leq C_n\sqrt{\varepsilon}$$
for some $o$-symmetric ellipsoid $E\subseteq \R^n$.
\end{theorem}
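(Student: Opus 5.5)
The plan is to reduce to the normalized position, apply Proposition~\ref{thm-main-stab} in several directions, and glue the resulting local ellipsoids using Lemma~\ref{EEprime-sandwiched}.

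\emph{Normalization.} The functional in \eqref{Ball-ineq-uncond-stab-cond} is linearly invariant, so we may replace $K$ by a linear image with $K^\circ$ isotropic and $|K^\circ|=\omega_n$. Then \eqref{ball-in-out-isotropic} holds. A bound of the form $|E\Delta K|\leq C\sqrt{\varepsilon}$ in this position transfers back to the original body: the normalizing map is only determined up to rotation and scaling, and the statement is really about the homothety-normalized distance $A(K,E)$, which is invariant under this map. Isotropy gives
\[
\int_K\int_{K^\circ}\langle x,y\rangle^2=\sum_i\int_K x_i^2\int_{K^\circ}x_i^2 .
\]
Each term is at most $(\int_{B_2^n}x_1^2)^2$ by Theorem~\ref{thm-main}, and the same holds for every orthonormal basis. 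So \eqref{Ball-ineq-uncond-stab-cond} forces
\[
\int_K\langle x,u\rangle^2\int_{K^\circ}\langle x,u\rangle^2\geq(1-n\varepsilon)\Big(\int_{B_2^n}x_1^2\Big)^2
\quad\text{for every } u\in S^{n-1},
\]
since any $u$ can be completed to an orthonormal basis.

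\emph{Local ellipsoids.} Fix $u=e_1$ and take a Yao-Yao equipartition $\mathcal{P}$ of $x_1^2\mathbf{1}_K\,dx$ based on $e_1^\perp$ (Theorem~\ref{thm old A}). Proposition~\ref{thm-main-stab} gives, for each $A\in\mathcal{P}$, an ellipsoid $E_A$ with $\frac1R B_2^n\subseteq E_A\subseteq RB_2^n$ and
\[
|((K\Delta E_A)\cap A)\setminus\Theta_{e_1,\Lambda}|\leq C\Lambda^{-2}\sqrt{\varepsilon}.
\]
Each cone $A$ has a uniform lower bound on its inradius, coming from \eqref{V-lower-bound} together with Proposition~\ref{prop-op}. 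The strip $\Theta_{e_1,\Lambda}$ is harmless: fixing $\Lambda$ small compared to $1/R$, the set $A\setminus\Theta_{e_1,\Lambda}$ still contains a subcone $\sigma_A$ of inradius bounded below.

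\emph{Gluing, and the expected obstacle.} The main difficulty is gluing the ellipsoids $E_A$ into a single one, and recovering the strip region near $e_1^\perp$. I would argue as follows.
\begin{enumerate}
\item For neighboring cones $A,B$, mimic the equality argument: choose a second equipartition $\mathcal{Q}$ (with a different base hyperplane inside $e_1^\perp$) having a cone $\Omega$ that meets both $A$ and $B$ in subcones of controlled inradius. Then $|(E_A\Delta E_\Omega)\cap\sigma|\ll\sqrt{\varepsilon}$ on such a subcone $\sigma$.
\item Lemma~\ref{EEprime-sandwiched} upgrades this to $|E_A\Delta E_\Omega|\ll\sqrt{\varepsilon}$, and likewise for $E_B$.
\item Connectivity of the neighbor graph, with boundedly many steps, gives $|E_A\Delta E|\ll\sqrt{\varepsilon}$ for a single $E=E_{A_0}$ and all $A$. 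Summing over $\mathcal{P}$ yields $|(K\Delta E)\setminus\Theta_{e_1,\Lambda}|\ll\sqrt{\varepsilon}$.
\end{enumerate}
The hard part of step 1 is making "controlled inradius of $A\cap\Omega$" quantitative. I would first try a compactness-free version: choose $\mathcal{Q}$ adapted to a coordinate system rotated slightly from that of $\mathcal{P}$, and use Proposition~\ref{prop-op} to bound the cone geometry of both partitions.

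\emph{Removing the strip.} Repeat the whole argument with $u=e_2$ to get an ellipsoid $E'$ with $|(K\Delta E')\setminus\Theta_{e_2,\Lambda}|\ll\sqrt{\varepsilon}$. The region outside $\Theta_{e_1,\Lambda}\cup\Theta_{e_2,\Lambda}$ contains a cone of inradius bounded below, on which $E$ and $E'$ are $\sqrt{\varepsilon}$-close. Lemma~\ref{EEprime-sandwiched} then gives $|E\Delta E'|\ll\sqrt{\varepsilon}$. Finally,
\[
K\Delta E\subseteq\big((K\Delta E)\setminus\Theta_{e_1,\Lambda}\big)\cup\big((K\Delta E')\setminus\Theta_{e_2,\Lambda}\big)\cup(E\Delta E')\cup(\Theta_{e_1,\Lambda}\cap\Theta_{e_2,\Lambda}\cap\cdots).
\]
For $n\ge 3$ the last piece is still a thick set. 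I would therefore use $n$ directions $e_1,\dots,e_n$, or rather $u$ and a few rotations of it, so that the intersection of all the strips meets $RB_2^n$ only in a null set. For example, $\bigcap_{i=1}^n\Theta_{e_i,\Lambda}\cap\{x:|x|>\sqrt{n}\Lambda\}=\emptyset$, and the small ball $\sqrt n\Lambda B_2^n$ lies inside both $K$ and $E$ once $\Lambda<1/(n\sqrt n R)$. This gives $|K\Delta E|\leq C_n\sqrt{\varepsilon}$.
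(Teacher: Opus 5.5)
\textbf{The gap: gluing the local ellipsoids (your step 1).} Your normalization and the reduction to near-equality $(1-n\varepsilon)$ in every direction are sound and match the paper, as are the local ellipsoids from Proposition~\ref{thm-main-stab} and the final strip removal with $n$ directions. The gap is the step you flag yourself as the hard part. For neighboring $A,B\in\mathcal{P}$ you need a cone $\Omega$ of a second equipartition $\mathcal{Q}$ that meets both $A$ and $B$ in sets of inradius bounded below by a constant depending only on $n$. Nothing you have provides this.
\begin{itemize}
\item You choose only the coordinate system. The cones of $\mathcal{Q}$ are then forced by the equipartition condition for $x_1^2\mathbf{1}_K\,dx$, so you have no control over where they sit relative to the cones of $\mathcal{P}$.
\item Proposition~\ref{prop-op} bounds the shape of each individual cone. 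It says nothing about the mutual position of cones from two different partitions.
\item Rotating the coordinate system slightly works against you. If $\mathcal{Q}$ is close to $\mathcal{P}$, then $\Omega$ nearly coincides with one cone of $\mathcal{P}$ and meets its neighbor only in a thin sliver. In any case, a continuity argument would not give constants uniform in $K$.
\item Distinct cones of $\mathcal{P}$ have disjoint interiors, so $\mathcal{P}$ itself offers no common fat region on which to compare $E_A$ and $E_B$.
\end{itemize}
Your strip removal needs one ellipsoid per direction, so the whole argument rests on this unproved step.

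\textbf{The missing idea: a single hub through the shared axis.} The paper never glues neighbors. It fixes one cone $A_0\in\mathcal{P}_0$ with ellipsoid $E_0$ and the fat subcone $\sigma_0=\mathrm{pos}(z_0+\frac{\tilde r}{2}B^n_2)\subseteq A_0$. On it, $|(K\Delta E_0)\cap\sigma_0|\leq C_0\sqrt{\varepsilon}$, because the part of $\sigma_0$ inside the strip lies in $\frac1R B^n_2\subseteq K\cap E_0$.

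The paper then chooses the $n$ auxiliary directions $u_1,\dots,u_n$ \emph{inside} $\sigma_0$, as normalized vertices of a small regular simplex around $z_0$, rather than as $e_1,\dots,e_n$. All $2^{n-1}$ cones $A_j$ of $\mathcal{P}_i$ in $\{\langle x,u_i\rangle\geq 0\}$ contain the common axis $v_i$. Since $\langle u_i,v_i\rangle\geq\sqrt{1-n\varepsilon}$, the point $t_0v_i$ lies deep in $\sigma_0\cap K$.

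Interpolating between the ball $t_0v_i+\frac{\tilde r}{8}B^n_2\subseteq K\cap\sigma_0$ and the inradius ball of $A_j\cap K$ gives a ball of radius $\tilde r^2/(16n)$ inside $A_j\cap\sigma_0\cap K$. Lemma~\ref{EEprime-sandwiched} then compares every $E_j$ directly with $E_0$. This yields $|((K\Delta E_0)\cap A_j)\setminus\Theta_{u_i,\tilde\Lambda}|\ll\sqrt{\varepsilon}$ for all $j$, hence for the whole space by symmetry.

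The same choice of the $u_i$ gives $\bigcap_i\Theta_{u_i,\Lambda}\subseteq\tau\Lambda B^n_2$, which closes the argument as in your final step. What your proposal lacks is exactly this: choose the new directions so that the shared axis of each new equipartition runs through a region where one fixed ellipsoid is already known to approximate $K$.
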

\begin{proof}
We may assume that $K^\circ$ is in isotropic position and $|K^\circ|=\omega_n$, and hence \eqref{ball-in-out-isotropic} yields that
\begin{equation}
\label{thm12-K-ball-in-out-isotropic}
\frac1n\,B^n_2\subseteq K\subseteq nB^n_2.
\end{equation}

Let us discuss some consequences of the conditions \eqref{Ball-ineq-uncond-stab-cond} and that $K^\circ$ is in isotropic position before we start the actual proof of Theorem~\ref{Ball-ineq-stab}. For any $u\in S^{n-1}$, we claim that
\begin{equation}
\label{Kxu-neps-cond}
\int_K\langle x,u\rangle^2\,dx\cdot\int_{K^\circ}\langle y,u\rangle^2\,dy\geq (1-n\varepsilon)\left(\int_{B^n_2}\langle x,u\rangle^2\,dx\right)^2.
\end{equation}
To prove \eqref{Kxu-neps-cond}, 
{\color{black}we may assume $u=e_n$.} Recalling the proof \eqref{eq-thm-conj-n=2-proof}, we deduce from {\color{black} \eqref{Ball-ineq-uncond-stab-cond}} that
{\color{black}\begin{align*}
\sum_{i=1}^n\int_K x_i^2\,dx\cdot\int_{K^\circ} y_i^2\,dy
\geq (1-\varepsilon)\int_{B^n_2}\int_{B^n_2}\langle x,y\rangle^2\,dydx=(1-\varepsilon)n\left(\int_{B^n_2}\langle x,u\rangle^2\,dx\right)^2.
\end{align*}}
As $\int_K x_i^2\,dx\cdot\int_{K^\circ}y_i^2\,dy\leq \left(\int_{B^n_2}\langle x,u\rangle^2\,dx\right)^2$ holds for $i=1,\ldots,n$ by Theorem~\ref{thm-main}, we conclude \eqref{Kxu-neps-cond}.

Next, we discuss some properties of Yao-Yao partitions.
For any $u\in S^{n-1}$, we consider the even measure $d\mu(x)=\langle x,u\rangle^2\mathbf{1}_K(x)\,dx$,  and a Yao-Yao equipartition $\mathcal{P}$ of $\R^n$ for $\mu$ based on $u^\bot$, centered at the origin and having an axis $v\in S^{n-1}$ satisfying that $\langle u,v\rangle>0$. It follows from \eqref{Kxu-neps-cond} and
Proposition~\ref{thm-main-stab} that there exist $\widetilde{C}>0$, $R>n$ and $\tilde{\varepsilon}_0\in(0,1)$ depending only on $n$  such that if $\varepsilon<\tilde{\varepsilon}_0$, then 
\begin{equation}\label{thm12-uv-close-dot-prod}
\langle u,v\rangle\geq \sqrt{1-n\varepsilon};    
\end{equation}
moreover, for any
$A=\textnormal{pos}(v_1,\dots,v_n)\in\mathcal{P}$, $v_1,\dots,v_n\in S^{n-1}$, one finds  an $o$-symmetric ellipsoid $E_{A}$  (depending on $A$ and $K$), satisfying \eqref{thm-main-stab-nostrip} and \eqref{thm-main-stab-in-out-radius}.

In addition, we claim that there exists an $\tilde{r}>0$ depending only on $n$ such that if $A\in\mathcal{P}$, then
\begin{equation}
\label{inradius-cone-inYaoYao}
r(A\cap K)\geq \tilde{r}\mbox{ \ for any }A\in \mathcal{P}.
\end{equation}
{\color{black}To prove \eqref{inradius-cone-inYaoYao}, note that it follows exactly as in the proof of \eqref{V-lower-bound}, that $|A\cap B_2^n|\geq\delta_n$ for some computable constant $\delta_n$ that depends only on $n$. If $\Psi:\R^n\to\R^n$ is the linear map given by  $\Psi(v_i)=e_i$, $i=1,\dots,n$, then $\Psi K\supseteq \|\Psi^{-1}\|^{-1}_{\textnormal{op}}n^{-1}B_2^n$, thus  $\Psi(K\cap A)$ contains a ball of radius $r(\R^n_+\cap (\|\Psi^{-1}\|^{-1}_{\textnormal{op}}n^{-1}B_2^n))=:a$ and therefore $K\cap A$ contains a ball of radius $\|\Psi\|_{\textnormal{op}}^{-1}a$. Then, \eqref{inradius-cone-inYaoYao} follows immediately from Proposition \ref{prop-op}. }


For a fixed $u_0\in S^{n-1}$, we consider the measure $d\mu_0(x)=\langle x,u_0\rangle^2\mathbf{1}_K(x)\,dx$,  and a Yao-Yao {\color{black} equipartition $\mathcal{P}_0$ of $\R^n$ for $\mu_0$} based on $u_0^\bot$ and centered at the origin. We also fix an $A_0\in \mathcal{P}_0$ with $A_0\subseteq\{x\in\R^n:\langle u_0,x\rangle\geq 0\}$. It follows from \eqref{inradius-cone-inYaoYao} that there exists a $z_0$ such that 
(cf. \eqref{thm12-K-ball-in-out-isotropic})
\begin{equation}
\label{ball-in-A0-inYaoYao}
z_0+\frac{\tilde{r}}2\,B^n_2\subseteq A_0\cap K\subseteq nB^n_2\mbox{ \ and \ }\langle u_0,y\rangle\geq \tilde{r}\mbox{ \ for } y\in z_0+\frac{\tilde{r}}2\,B^n_2.
\end{equation}
We apply \eqref{thm-main-stab-nostrip} and \eqref{thm-main-stab-in-out-radius}  with $\Lambda=\frac{\tilde{r}}{nR}$ to $A_0\in \mathcal{P}_0$, and deduce that there exists an $o$-symmetric ellipsoid $E_0$ such that (cf. \eqref{thm12-K-ball-in-out-isotropic} and $R\geq n$)
\begin{align}
\label{thm12-main-stab-nostrip-E0}
\big|\big((K\Delta E_0)\cap A_0\big)\backslash {\color{black}\Theta_{u_0,\frac{\tilde{r}}{nR}}}\big|\leq &C_0\cdot\varepsilon^{\frac{1}{2}} \\
\label{thm12-main-stab-in-out-radius-E0}
\frac1R\cdot B^n_2\subseteq& E_0\cap K\subseteq E_0\subseteq RB^n_2
\end{align}
where $C_0>0$ depends only on $n$.

We consider the convex cone
\begin{equation}
\label{thm12-sigma0-def}
\sigma_0={\rm pos}\left(z_0+\frac{\tilde{r}}2\,B^n_2\right)\subseteq A_0. 
\end{equation}
If $x\in \sigma_0\cap \Theta_{u,\frac{\tilde{r}}{nR}}$, then $x=ty$ for a $y\in z_0+\frac{\tilde{r}}2\,B^n_2$ and $t\geq 0$, and hence $t\leq \frac{1}{nR}$ by \eqref{ball-in-A0-inYaoYao}. As $\|y\|\leq n$ also by \eqref{ball-in-A0-inYaoYao}, we deduce that $x\in \frac1R\cdot B^n_2\subseteq E_0\cap K$ by \eqref{thm12-main-stab-in-out-radius-E0}, thus
\eqref{thm12-main-stab-nostrip-E0} yields that
\begin{equation}
\label{thm12-K-E0-in-sigma0}
\big|(K\Delta E_0)\cap \sigma_0\big|\leq C_0\cdot\varepsilon^{\frac{1}{2}}.
\end{equation}

Let $w_1,\ldots,w_n\in S^{n-1}\cap z_0^\bot$ be the vertices of a regular $(n-1)$-simplex, and let $y_i=z_0+\frac{\tilde{r}}4\,w_i$ and $u_i=y_i/\|y_i\|$ for $i=1,\ldots,n$. Here, $y_i=t_0u_i$  for $t_0=\|y_i\|$, $i=1,\ldots,n$, and 
\begin{equation}\label{t0 bounds}
n\geq t_0> \|z_0\|\geq \frac{3}2\,\tilde{r}    
\end{equation}
hold by \eqref{ball-in-A0-inYaoYao}. It follows directly from \eqref{ball-in-A0-inYaoYao},  \eqref{thm12-sigma0-def} and \eqref{t0 bounds} respectively that 
\begin{align} 
\label{thm12-uiyi-ball-in-sigma0}
t_0u_i+\frac{\tilde{r}}4\,B^n_2\subseteq&\sigma_0\cap K&&\mbox{for }i=1,\ldots,n,\\
\label{thm12-uiyi-ball-far}
\langle u_i, y\rangle\geq &\tilde{r} && \mbox{for }y\in t_0u_i+\frac{\tilde{r}}4\,B^n_2.
\end{align}
 We claim that there exists $\tau>0$ depending only on $n$ such that
\begin{equation}
    \label{thm12-uiyi-capTheta}
\bigcap_{i=1}^n  \Theta_{u_i,\Lambda} \subseteq \tau\Lambda\,B^n_2\qquad\qquad\qquad\qquad \ \ \mbox{for any }\Lambda>0. \ \ \ \ 
\end{equation}

{\color{black}To prove \eqref{thm12-uiyi-capTheta}, we observe that by the construction of $u_1,\dots,u_n$ and \eqref{t0 bounds}, the regular $(n-1)$-dimensional simplex $\Sigma_0:=\textrm{conv}\{u_1,\dots,u_n\}$ contains an $(n-1)$-dimensional ball $B_0$ of radius depending only on $n$ hence, for the simplex $\Sigma={\rm conv}\{o,u_1,\ldots,u_n\}$ (again by \eqref{t0 bounds}) one has
$r(\Sigma)\geq r(\textrm{conv}(o,B_0))\geq\tilde{\tau}$  for some $\tilde{\tau}>0$ depending only on $n$.} 
Now, if $p_1,\ldots,p_n$ form the dual basis for $u_1,\ldots,u_n$; namely, $\langle p_i,u_i\rangle=1$ and $\langle p_i,u_j\rangle=0$ for $j\neq i$,  then $r(\Sigma)\geq \tilde{\tau}$ implies that $\|p_i\|\leq 1/\tilde{\tau}$ for $i=1,\ldots,n$, and hence if $x\in \bigcap_{i=1}^n  \Theta_{u_i,\Lambda}$, then
$$
\|x\|=\left\|\sum_{i=1}^n\langle x,u_i\rangle p_i\right\|\leq \frac{n}{\tilde{\tau}}\cdot\Lambda,
$$ 
proving \eqref{thm12-uiyi-capTheta} with $\tau=n/\tilde{\tau}$.

 Our core claim is that there exists $\widetilde{\Lambda}>0$ depending only on $n$ such that $\widetilde{\Lambda}\leq \frac1{\tau R}$, and for any $i=1,\ldots,n$, we have
\begin{equation}
\label{thm12-ui-keyclaim-Xialeph}
\big|(K\Delta E_0)\backslash \Theta_{u_i,\widetilde{\Lambda}}\big|\leq \widetilde{C}_0 \cdot\varepsilon^{\frac{1}{2}}
\end{equation}
where $\widetilde{C}_0>0$ depends only on $n$.

To prove \eqref{thm12-ui-keyclaim-Xialeph}, we consider  a Yao-Yao equipartition $\mathcal{P}_i$ of $\R^n$ for the measure $d\mu_i(x)=\langle x,u_i\rangle^2\mathbf{1}_K(x)\,dx$ based on $u_i^\bot$, centered at the origin and having an axis $v_i\in S^{n-1}$ satisfying that $\langle u_i,v_i\rangle>0$. We deduce from $t_0\leq n$ and 
\eqref{thm12-uv-close-dot-prod}  that
,  if $\varepsilon>0$ is small enough, \eqref{thm12-uiyi-ball-in-sigma0} and \eqref{thm12-uiyi-ball-far} yield that 
\begin{equation}
\label{ball-t0vi-inYaoYao}
t_0v_i+\frac{\tilde{r}}8\,B^n_2\subseteq K\cap \sigma_0\mbox{ \ and \ }\langle u_i,y\rangle\geq \tilde{r}\mbox{ \ for } y\in t_0v_i+\frac{\tilde{r}}8\,B^n_2.
\end{equation}
Let $A_1,\ldots,A_{2^{n-1}}$ be the elements of $\mathcal{P}_i$ containing $v_i$, and hence $t_0v_i\in A_j\cap K$ for each $A_j$. For any $j=1,\ldots,2^{n-1}$, there exists a $\tilde{z}_j\in A_j\cap K$ such that $\tilde{z}_j+\tilde{r}B^n_2\subseteq A_j\cap K$ by \eqref{inradius-cone-inYaoYao}. As $\tilde{z}_j+\tilde{r}B^n_2\subseteq t_0v_i+2nB^n_2$ according to \eqref{thm12-K-ball-in-out-isotropic},  \eqref{ball-t0vi-inYaoYao} implies that $z_j:=t_0v_i+\frac{\tilde{r}}{16n}(\tilde{z}_j-t_0v_i)$ satisfies  
$$z_j+\frac{\tilde{r}^2}{16n}\,B^n_2\subseteq K\cap \sigma_0\cap A_j\mbox{ \ and \ }\langle u_i,y\rangle\geq \tilde{r}\mbox{ \ for } y\in z_j+\frac{\tilde{r}^2}{16n}\,B^n_2.$$

We apply \eqref{thm-main-stab-nostrip} and \eqref{thm-main-stab-in-out-radius} to $A_j\in\mathcal{P}_i$ with 
$\Lambda=\widetilde{\Lambda}:=\min\left\{\frac{\tilde{r}}{nR},\frac{1}{\tau R}\right\}$ 
and deduce that there exists an $o$-symmetric ellipsoid $E_j$ such that (cf. \eqref{thm12-K-ball-in-out-isotropic} and $R\geq n$)
\begin{align}
\label{thm12-main-stab-nostrip-Ej}
\big|\big((K\Delta E_j)\cap A_j\big)\backslash {\color{black}\Theta_{u_i,\widetilde{\Lambda}}}\big|\leq &C'\cdot\varepsilon^{\frac{1}{2}} \\
\label{thm12-main-stab-in-out-radius-Ej}
\frac1R\cdot B^n_2\subseteq& E_j\cap K \subseteq E_j\subseteq RB^n_2
\end{align}
where $C'>0$ depends only on $n$.

We consider the convex cone
\begin{equation}
\label{thm12-sigmaj-def}
\sigma_j={\rm pos}\left(z_j+\frac{\tilde{r}}8\,B^n_2\right)\subseteq A_j\cap\sigma_0. 
\end{equation}
Repeating the proof of \eqref{thm12-K-E0-in-sigma0} with
\eqref{thm12-main-stab-nostrip-Ej} in place of \eqref{thm12-main-stab-nostrip-E0} and
\eqref{thm12-main-stab-in-out-radius-Ej} in place of \eqref{thm12-main-stab-in-out-radius-E0} we obtain
\begin{equation}
\label{thm12-K-Ej-in-sigmaj}
\big|(K\Delta E_j)\cap \sigma_j\big|\leq C'\cdot\varepsilon^{\frac{1}{2}}.
\end{equation}
Comparing \eqref{thm12-K-Ej-in-sigmaj} to \eqref{thm12-K-E0-in-sigma0}, and $\sigma_j\subseteq\sigma_0$ (cf. \eqref{thm12-sigmaj-def}) yield that

$$\big|(E_0\Delta E_j)\cap \sigma_j\big|\leq (C'+C_0)\cdot\varepsilon^{\frac{1}{2}}.
$$
In turn, we deduce from Lemma~\ref{EEprime-sandwiched}, \eqref{thm12-main-stab-in-out-radius-E0}, \eqref{thm12-main-stab-in-out-radius-Ej} and \eqref{thm12-sigmaj-def} that $|E_0\Delta E_j|\leq C^*\cdot\varepsilon^{\frac{1}{2}}$ for a $C^*>0$ depending only on $n$, and hence \eqref{thm12-main-stab-nostrip-Ej} yields
\begin{equation}
\label{thm12-main-stab-nostrip-KE0Aj}
\big|\big((K\Delta E_0)\cap A_j\big)\backslash \Theta_{u_i,\widetilde{\Lambda}}\big|\leq (C'+C^*)\cdot\varepsilon^{\frac{1}{2}}.
\end{equation}
As $\bigcup_{j=1}^{2^{n-1}}A_j=\{x\in \R^n:\langle x,u_i\rangle\geq 0\}$, and $K$ and $E_0$ are $o$-symmetric, it follows from {\color{black}\eqref{thm12-main-stab-nostrip-KE0Aj}} that
$$\big|(K\Delta E_0)\backslash \Theta_{u_i,\widetilde{\Lambda}}\big|\leq 2^n (C'+C^*)\cdot\varepsilon^{\frac{1}{2}},
$$
proving the core claim \eqref{thm12-ui-keyclaim-Xialeph} with $\widetilde{C}_0=2^n (C'+C^*)$ for any $i=1,\ldots,n$.

Now, $\bigcap_{i=1}^n  \Theta_{u_i,\widetilde{\Lambda}}\subseteq E_0\cap K$ by $\widetilde{\Lambda}\leq \frac1{\tau R}$, \eqref{thm12-uiyi-capTheta} and $\frac1R\,B^n_2\subseteq E_0\cap K$; therefore, our core claim \eqref{thm12-ui-keyclaim-Xialeph} implies $|K\Delta E_0|\leq n\widetilde{C}_0\varepsilon^{\frac{1}{2}}$, completing the proof of Theorem~\ref{Ball-ineq-stab}.
\end{proof}

\section{Proof of Theorem~\ref{BS-ineq-stab} and Collorary~\ref{Affine-ineq-stab}}
\label{secSantalo}

Comparing a symmetric convex body $L\subseteq\R^n$ with the centered ball of the same volume shows that 
\begin{equation}
\label{second-moment0}
|L|^{\frac{n+2}{n}}\leq \gamma_n\int_{L}\|x\|^2\,dx \mbox{ \ \ for \ }\gamma_n=\frac{(n+2)\omega_n^{\frac2n}}{n},
\end{equation}
where equality holds if and only if $L$ is a Euclidean ball. 

If $K$ is a symmetric convex body such that $K^\circ$ is in isotropic position, then it follows from first \eqref{second-moment0}, then applying $\|x\|^2=\sum_{i=1}^nx_i^2$ to $K$ and the definition \eqref{quasi-iso-u} of isotropicity 
to $L=K^\circ$ and $u=e_1,\ldots,e_n$, and after that \eqref{eq-thm-conj-n=2-proof}, and finally Theorem~\ref{thm-Ball-conj}, and then reversing the path, that
\begin{align}
\label{BS-Ball-K}
\left(|K|\cdot|K^\circ|\right)^{\frac{n+2}{n}}\leq &\gamma^2_n\int_{K}\|x\|^2\,dx\cdot \int_{K^\circ}\|x\|^2\,dx=
n\gamma^2_n\sum_{i=1}^n\int_{K} x_i^2\,dx\cdot \int_{K^\circ}x_i^2\,dx\\
\nonumber
=&n\gamma^2_n\int_K\int_{K^\circ}\langle x,y\rangle^2\, dx\, dy\leq n\gamma^2_n\int_{B_2^n}\int_{ B^n_2}\langle x,y\rangle^2\, dx\, dy\\
\label{BS-Ball-B}
=&\gamma^2_n\int_{B_2^n}\|x\|^2\,dx\cdot \int_{B_2^n}\|x\|^2\,dx=\left(\omega_n\cdot\omega_n\right)^{\frac{n+2}{n}}.
\end{align}

Comparing  \eqref{BS-Ball-K} and \eqref{BS-Ball-B} shows how Theorem~\ref{thm-Ball-conj} yields the Blaschke-Santal\'o inequality \eqref{eq-san}, but we also deduce the inequality \eqref{eq-moment-product} for the moment product. Actually, combining \eqref{BS-Ball-K} and \eqref{BS-Ball-B} with 
Theorem~\ref{Ball-ineq-stab0} directly yields the stability version Theorem~\ref{BS-ineq-stab} of the Blaschke-Santal\'o inequality.

 For Corollary~\ref{Affine-ineq-stab} concerning the affine surface area
 , we note that (cf. Lutwak \cite{Lut93b})
\begin{equation*}
\Omega(K)\leq n|K|^{\frac{n}{n+1}}|K^\circ|^{\frac{1}{n+1}}
\end{equation*}
In particular, Theorem~\ref{BS-ineq-stab} yields Corollary~\ref{Affine-ineq-stab}.

Finally, we show that the error term in Theorem~\ref{Ball-ineq-stab0}, Theorem~\ref{BS-ineq-stab} and Corollary~\ref{Affine-ineq-stab} is of optimal order. For symmetric convex bodies $K,M\subseteq \R^n$, the stability estimates in the statements above can be expressed in terms of the linear invariant quantity
$$
\widetilde{A}(K,M)=\min_{\Phi\in{\rm GL}(n)}A(K,\Phi M)
$$
where it is easy to see that we can write minimum in the definition. For example, the stability version Theorem~\ref{BS-ineq-stab} of the Blaschke-Santal\'o inequality can be written in the form
\begin{equation}
\label{BS-ineq-stab-tildeA}
|K|\cdot |K^\circ|\leq \left(1-\theta_n\cdot  \widetilde{A}(K,B^n_2)^2\right)\omega_n^2
\end{equation}
where $\theta_n>0$ depends only on $n$. 

\begin{example}[Optimality of Theorems~\ref{Ball-ineq-stab0} and \ref{BS-ineq-stab} and Corollary ~\ref{Affine-ineq-stab}] 

According to the argument above in Section~\ref{secSantalo}, it is sufficient to verify the optimality of the exponent $2$ of $\widetilde{A}(K,B^n_2)$ in the error term for the stability version \eqref{BS-ineq-stab-tildeA} of the Blaschke-Santal\'o inequality. In particular,
we prove that if $t\in(-\eta,\eta)$ for some $\eta>0$ depending on $n$, then there exists a family of convex bodies $K_t\subseteq\R^n$ and constants $c_0,C_0>0$ depending only on $n$ such that
\begin{description}
\item{(i)} $|K_t|=\omega_n$ and $|K_t\Delta B^n_2|\leq C_0|t|$;
\item{(ii)}  $|K_t\Delta E|\geq c_0|t|$ for any $o$-symmetric ellipsoid $E\subseteq\R^n$ with $|E|=\omega_n$;
\item{(iii)} $|K_t^\circ|\geq (1-C_0t^2)\omega_n$.
\end{description}
According to the properties (i)-(iii), the family $K_t$ satisfies that  $\widetilde{A}(K_t,B^n_2)$ tends to zero as $t$ tends to zero, and
$$
|K_t|\cdot |K_t^\circ|\geq \left(1-C\cdot \widetilde{A}(K_t,B^n_2)^2\right)\omega_n^2\mbox{ \ for \ }C=C_0\omega_n^2/c_0^2,
$$
verifying the optimality of the error term in the stability version \eqref{BS-ineq-stab-tildeA} of the Blaschke-Santal\'o inequality.

To construct $K_t$ if $|t|$ is small, 
let $u_0=\frac{e_1+\ldots+e_n}{\sqrt{n}}\in S^{n-1}$, and hence
$$
\|u_0-e_i\|>\frac12\mbox{ \ for }i=1,\ldots,n.
$$
We fix an even $C^\infty$ function $\varphi:S^{n-1}\to[0,1]$ such that
\begin{align*}
\varphi(u)=&0\mbox{ \ if }\|u-u_0\|\geq \frac14~~\textnormal{and}~~\|u-(-u_0)\|\geq\frac14\\
\varphi(u)=&1\mbox{ \ if }\|u-u_0\|\leq \frac1{8}.
\end{align*}

Now, there exists $\eta\in(0,1)$ such that if  $t\in[-\eta,\eta]$, then the even $C^\infty$ function $h_t=1+t\varphi$ on $S^{n-1}$ satisfies that  the symmetric matrix $\nabla^2 h_t+h_tI$ is positive definite
where $\nabla^2 h_t$ is the spherical Hessian and $\nabla h$ is the spherical gradient with respect to a moving frame, and hence there exists a symmetric convex body $\widetilde{K}_t\subseteq\R^n$ with $C^\infty_+$ boundary such that for any $u\in S^{n-1}$, we have
$$
h_{\widetilde{K}_t}(u)= h_t(u),\mbox { \ and  the $x\in\partial \widetilde{K}_t$ with exterior normal $u$ is }x=\nabla h_t(u)+h_t(u)u, 
$$ 
where $h_L$ stands for the support function of a convex body $L$.
We also consider the convex cones $\sigma={\rm pos}\{u\in S^{n-1}:\|u-e_1\|\leq \frac14\}$ and $\sigma_0={\rm pos}\{u\in S^{n-1}:\|u-u_0\|\leq \frac18\}$.
Since 
\begin{align*}
|\widetilde{K}_t|=&\frac1n\int_{S^{n-1}}h_t\det(\nabla^2 h_t+h_tI)\,d\mathcal{H}^{n-1},\\
|\widetilde{K}_t^\circ|=&\frac1n\int_{S^{n-1}}h_t^{-n}\,d\mathcal{H}^{n-1},
\end{align*}
we deduce that $|\widetilde{K}_t|$ and $|\widetilde{K}_t^\circ|$ are $C^\infty$ functions of $t\in(-\eta,\eta)$; moreover,
\begin{align*}
(1-|t|)B^n_2\subseteq &\widetilde{K}_t\subseteq (1+|t|)B^n_2,\\
h_{\widetilde{K}_t}(u)=\varrho_{\widetilde{K}_t}(u)= &1\mbox{ \ if }u\in \sigma\cap S^{n-1},\\
h_{\widetilde{K}_t}(u)=\varrho_{\widetilde{K}_t}(u)= &1+t\mbox{ \ if }u\in \sigma_0\cap S^{n-1}.
\end{align*}
After possibly choosing $\eta>0$ smaller, the renormalized body
$$
K_t=\lambda_t\cdot \widetilde{K}_t \mbox{ \ for }\lambda_t=\frac{\omega_n^{\frac1n}}{|\widetilde{K}_t|^{\frac1n}}\in\left(\frac34,\frac43\right)
$$
satisfies that $|K_t|=\omega_n$, $\frac12 B^n_2\subseteq K_t\subseteq 2 B^n_2$; moreover, $\lambda_t$ and $f(t)=|K_t^\circ|$ are $C^\infty$ functions of $t\in(-\eta,\eta)$, $\lambda_t=1+O(|t|)$ and hence 
\begin{align}
\nonumber
\left(1-O(|t|)\right)B^n_2\subseteq & K_t\subseteq \left(1+O(|t|)\right)B^n_2,\\
\label{hKt-in-sigma}
h_{K_t}(u)=\varrho_{K_t}(u)= &\lambda_t\mbox{ \ if }u\in \sigma\cap S^{n-1},\\
\label{hKt-in-sigma0}
h_{K_t}(u)=\varrho_{K_t}(u)= &(1+t)\lambda_t\mbox{ \ if }u\in \sigma_0\cap S^{n-1}.
\end{align}

We are ready to prove that $K_t$ satisfies (i)-(iii). Here (i) readily holds. For (iii), we observe that the Blaschke-Santal\'o inequality yields that $f$ has a maximum at $t=0$, and hence $f'(0)=0$. In turn, the Taylor formula $f(t)=f(0)+f'(0)t+\frac12\,f''(\xi)t^2$ where $|\xi|\leq |t|$ yields that
$f(t)\geq f(0)-C_0t^2$ for a constant $C_0>0$ independent of $t$, verifying (iii).

Finally, we prove (ii) indirectly; therefore, we suppose that there exists a sequence $t_k\neq 0$ tending to zero as $k$ tends to infinity and a sequence of $o$-symmetric ellipsoids $E_{t_k}$ such that  $|E_{t_k}|=\omega_n$ and
$$
\left|K_{t_k}\Delta E_{t_k}\right|= o(|t_k|)
$$
as $k$ tends to infinity, and seek a contradiction. We deduce from \eqref{hKt-in-sigma} and \eqref{hKt-in-sigma0} that
\begin{align*}
\Big|\big([\lambda_{t_k}B^n_2]\Delta E_{t_k}\big)\cap \sigma\Big|=&\Big|\left(K_{t_k}\Delta E_{t_k}\right)\cap \sigma\Big|= o(|t_k|),\\
\Big|\big(\left[(1+t_k)\lambda_{t_k}B^n_2\right]\Delta E_{t_k}\big)\cap \sigma_0\Big|=&\Big|\left(K_{t_k}\Delta E_{t_k}\right)\cap \sigma_0\Big|= o(|t_k|).
\end{align*}
In turn, Lemma~\ref{EEprime-sandwiched} yields that
$$
\Big|[\lambda_{t_k}B^n_2]\Delta E_{t_k}\Big|= o(|t_k|) \mbox{ \ and \ }
\Big|\left[(1+t_k)\lambda_{t_k}B^n_2\right]\Delta E_{t_k}\Big|=o(|t_k|),
$$
and hence the triangle inequality for the symmetric difference metric and $\lambda_{t_k}\in(\frac34,\frac43)$ imply that $\big|B^n_2\Delta \left[(1+t_k)B^n_2\right]\big|= o(|t_k|)$. However, $\big|B^n_2\Delta \left[(1+t_k)B^n_2\right]\big|\geq |t_k|\cdot \omega_n$, and this contradiction finally verifies (ii).

A similar computation, which we omit, shows the optimality of the exponent in Corollary ~\ref{Affine-ineq-stab} as well.
\end{example}

\noindent{\bf Acknowledgement: } B\"or\"oczky's research is supported in part by NKFIH NKKP grant 150613.

\noindent K\'aroly J. B\"or\"oczky, HUN-REN Alfr\'ed R\'enyi Institute of Mathematics, Budapest, Hungary\\
boroczky.karoly.j@renyi.hu\\

\noindent Konstantinos Patsalos, Department of Mathematics, University of Ioannina, Greece\\
k.patsalos@uoi.gr\\

\noindent Christos Saroglou, Department of Mathematics, University of Ioannina, Greece\\
csaroglou@uoi.gr \ \& \ christos.saroglou@gmail.com


\begin{thebibliography}{999}

\bibitem{And06}
B. Andrews:
Contraction of convex hypersurfaces by their affine normal. 
J. Differ. Geom., 43 (1996), no. 2, 207--230. 

\bibitem{AGM15}
S. Artstein-Avidan, A. Giannopoulos, V.D. Milman:
Asymptotic geometric analysis. Part I. 
Mathematical Surveys and Monographs, 202. American Mathematical Society, Providence, RI, 2015.
	
\bibitem{AGM21}
S. Artstein-Avidan, A. Giannopoulos, V.D. Milman: 
Asymptotic geometric analysis. Part II. 
Mathematical Surveys and Monographs, 261. American Mathematical Society, Providence, RI, 2021.
	

\bibitem{AKM04}
S. Artstein-Avidan, B. Klartag, V.D. Milman:
On the Santal\'o point of a function and a
functional Santal\'o inequality.
Mathematika, 54 (2004), 33--48.




\bibitem{Bal}
K. Ball:
Isometric problems in $\ell_p$ and sections of convex sets.
PhD thesis, University of Cambridge, 1986.

\bibitem{bal2} K. Ball: Some remarks on the geometry of convex sets. In: Lindenstrauss, J., Milman, V.D. (eds) Geometric Aspects of Functional Analysis. Lecture Notes in Mathematics, vol 1317. (1988) Springer, Berlin, Heidelberg. 

\bibitem{BaB11} 
K. Ball, K. B\"or\"oczky:
Stability of some versions of the Prékopa-Leindler inequality. 
Monatsh. Math., 163 (2011), no. 1, 1--14. 



\bibitem{Bor10} 
K. B\"or\"oczky:
Stability of the Blaschke-Santal\'o and the affine isoperimetric inequality.
Adv. Math., 225 (2010), no. 4, 1914--1928. 

\bibitem{DoH95}
G. Dolzmann, D. Hug: Equality of two representations of extended affine surface area. Arch. Math. (Basel), 65 (1995), 352--356.


\bibitem{Du} {S. Dubuc}: {Crit\`{e}res de convexit\'{e} et in\'{e}galit\'{e}s int\'{e}grales. }Ann. Inst. Fourier,
27 (1) (1977),  135--165.

\bibitem{FrM}
M. Fradelizi, M. Meyer:
Some functional forms of Blaschke-Santal\'o inequality.
Math. Z., 256 (2007), 379--395.

\bibitem{FMP10}
A. Figalli, F. Maggi, A. Pratelli:
A mass transportation approach to quantitative isoperimetric inequalities. 
Invent. Math., 182 (2010), 167--211. 

\bibitem{FMP08}
N. Fusco, F. Maggi, A. Pratelli:
The sharp quantitative isoperimetric inequality. 
Ann. of Math., (2), 168 (2008), 941--980.

\bibitem{FHT25+}
A. Figalli, P. van Hintum, M. Tiba: Sharp Quantitative Stability for the Prékopa-Leindler and Borell-Brascamp-Lieb Inequalities. arXiv:2501.04656
	

\bibitem{HL} Q. Huang, A. Li: The functional version of the Ball inequality. Proc. Amer. Math. Soc., 145
(2017), 3531--3541.

\bibitem{Iva15}
M. Ivaki:
Stability of the Blaschke-Santal\'o inequality in the plane. 
Monatsh. Math., 177 (2015), 451--459. 


\bibitem{KaS}
P. Kalantzopoulos, C. Saroglou:
On a $j$-Santal\'o Conjecture. Geom. Dedicata, 217 (2023) (article \# 91).

\bibitem{KLS95}
	R.~Kannan, L.~Lov\'asz, M.~Simonovits:
	Isoperimetric problems for convex bodies and a localization  lemma.
	Discrete Comput. Geom., 13 (1995), 541-559.
	

%


\bibitem{Leh09a} J. Lehec: On the Yao-Yao partition theorem. Arch. Math. (Basel), 92 (2009), 366--376.

\bibitem{Leh09b}
J. Lehec: 
Partitions and functional Santal\'o inequalities.  
Arch. Math. (Basel), 92  (2009), 89--94.

\bibitem{Leh09c}
J. Lehec: 
A direct proof of the functional Santal\'o inequality. C. R. Math. Acad. Sci. Paris, 347 (2009), no. 1-2, 55--58.

\bibitem{Le} L. Leindler: On a certain converse of H\"older’s inequality. II, Acta Sci. Math. (Szeged) 33 (1972), 217--223.

\bibitem{Lud10}
M. Ludwig: General affine surface areas. Adv. Math., 224 (2010), 2346--2360.

\bibitem{Lut91}
E. Lutwak: Extended affine surface area. Adv. Math., 85 (1991), 39--68.
	
\bibitem{Lut93b} E. Lutwak: Selected affine isoperimetric inequalities.  In: Handbook of convex geometry, Vol. A, B, 151--176, North-Holland, Amsterdam, 1993.
	
\bibitem{LZ} E. Lutwak, G. Zhang: Blaschke-Santal\'o inequalities. J. Differ. Geom., 47(1) (1997), 1--16. 

\bibitem{MeP}
M. Meyer, A. Pajor: On the Blaschke-Santal\'o inequality. Arch. Math. (Basel), 55 (1990), 82--93.

\bibitem{Mi-Pa} V. D. Milman, A. Pajor: {Isotropic position and inertia ellipsoids and zonoids of the unit ball of a normed $n$-dimensional space}. In: Lindenstrauss J., Milman V.D. (eds) Geometric Aspects of Functional Analysis. Lecture Notes in Mathematics, vol 1376. (1989) Springer, Berlin, Heidelberg. 

\bibitem{Pr}A. Pr\'ekopa: On logarithmic concave measures and functions. Acta Sci. Math. (Szeged), 34 (1973), 335--343.

\bibitem{San}
L. Santal\'o:
An affine invariant for convex bodies of $n$-dimensional space. Port. Math., 8 (1949), 155--161 (in Spanish).


\bibitem{Sch14}
R. Schneider: Convex Bodies: The Brunn-Minkowski Theory, second edition. Cambridge University Press, 2014.

\bibitem{ScW90}
C. Sch\"utt, E. M. Werner: The convex floating body. Math. Scand., 66 (1990), 275--290.


\bibitem{YY}A. C. Yao and F. F. Yao: A general approach to $d$-dimensional geometric queries. In: Proceedings of the seventeenth annual ACM symposium on Theory of computing, ACM Press (1985) 163--168.

	


	


\end{thebibliography}
\end{document}